\documentclass[12pt]{article}             

\usepackage{amsfonts}
\usepackage{amsmath}
\usepackage{graphicx}
\usepackage{theorem}                     
\usepackage{color}
\usepackage{hyperref}
  
\hbadness=10000
\vbadness=10000
\textheight 22.7truecm
\textwidth 16.9truecm
\footskip 1.3truecm 
\footskip 1.5truecm 
\voffset=-2.1truecm       
\hoffset=-1.5truecm
\def\Section{\setcounter{equation}{0}\section}
\newtheorem{theorem}{Theorem}[section]
\newtheorem{lemma}[theorem]{Lemma}

\newtheorem{definition}[theorem]{Definition}
\newtheorem{proposition}[theorem]{Proposition}
\newtheorem{remark}[theorem]{Remark}

\def\thetheorem{\thesection.\arabic{theorem}}
\def\thesection{\arabic{section}}

\def\theequation {\thesection.\arabic{equation}}
\newenvironment{proof}{\noindent{\bf Proof.~}}
{{\mbox{}\hfill {\small \fbox{}}\\}}
\newenvironment{keywords}{\noindent{\bf Keywords.~}}{}
\newenvironment{AMS}{\noindent{\bf 2010 AMS classifications.~}}{}
%


\def\beq{\begin{equation}\displaystyle}
\def\eeq{\end{equation}}
\def\bel{\begin{equation} \displaystyle \begin{array}{l} }
\def\eel{\end{array} \end{equation} }
\def\bell{\begin{equation} \displaystyle \begin{array}{ll}  }
\def\eell{\end{array} \end{equation} }

\def\bea{\begin{eqnarray}}
\def\eea{\end{eqnarray} }
\def\bean{\begin{eqnarray*}}
\def\eean{\end{eqnarray*} }
%

%
\catcode`@=11
\renewcommand\appendix{\bigskip {\noindent \Large \bf Appendix}
  \setcounter{section}{0}%
  \setcounter{subsection}{0}%
\setcounter{equation}{0}%
\setcounter{theorem}{0}%
\def\thetheorem{A.\arabic{theorem}}
\def\theequation {A.\arabic{equation}}}
\catcode`@=12

\def\eqref#1{(\ref{#1})}

\def\NN{\mathbb{N}}

\def\RR{\mathbb{R}}

\def\bbm1{\mathbbm{1}}

\def\ds{\displaystyle}

\def\eps{\varepsilon}

\def\pa{\partial}

\def\calF{{\cal F}}
\def\calE{{\cal E}}

\def\calM{{\cal M}}

\def\calP{{\cal P}}
\def\calD{{\cal D}}

\def\smes{{\cal S}_{\cal M}}

\def\achapo{\widehat{a}}


\title{Numerical methods for one-dimensional aggregation equations}

\author{Francois James\thanks{Math\'ematiques -- Analyse, Probabilit\'es, Mod\'elisation -- Orl\'eans (MAPMO),
Universit\'e d'Orl\'eans \& CNRS UMR 7349,
F\'ed\'eration Denis Poisson, Universit\'e d'Orl\'eans \& CNRS FR 2964,
45067 Orl\'eans Cedex 2, France ({\tt francois.james@univ-orleans.fr})}
        \and Nicolas Vauchelet\thanks{UPMC Univ Paris 06, UMR 7598, Laboratoire Jacques-Louis Lions,
CNRS, UMR 7598, Laboratoire Jacques-Louis Lions and
INRIA Paris-Rocquencourt, EPI MAMBA,
F-75005, Paris, France ({\tt vauchelet@ann.jussieu.fr})}
\thanks{The second author is supported by the French ``ANR blanc'' project 
Kibord: ANR-13-BS01-0004.}}

\begin{document}

\maketitle

\begin{abstract}
We focus in this work on the numerical discretization of the one dimensional
aggregation equation $\pa_t\rho + \pa_x (v\rho)=0$, $v=a(W'*\rho)$,
in the attractive case.
Finite time blow up of smooth initial data 
occurs for potential $W$ having a Lipschitz singularity at the origin.
A numerical discretization is proposed for which the convergence 
towards duality solutions of the aggregation equation is proved.
It relies on a careful choice of the discretized macroscopic velocity $v$ 
in order to give a sense to the product $v \rho$.
Moreover, using the same idea, we propose an asymptotic preserving scheme 
for a kinetic system in hyperbolic scaling converging towards the aggregation
equation in hydrodynamical limit.
Finally numerical simulations are provided to illustrate the results.
\end{abstract}

\begin{keywords}
aggregation equation, duality solutions, finite volume schemes, asymptotic preserving schemes, weak measure solutions, hydrodynamical limit.
\end{keywords}

\begin{AMS}
35B40, 35D30, 35L60, 35Q92, 65M08.
\end{AMS}

\Section{Introduction}

This paper is devoted to the numerical approximation of the so-called 
aggregation equation which writes in one space dimension
	\begin{equation}\label{eq:aggreg}
\pa_t\rho + \pa_x \big(a(W' * \rho)\rho\big)=0.
	\end{equation}
This equation is complemented with some initial data $\rho(0,x)=\rho^{ini}(x)$.
This nonlocal and nonlinear conservation equation 
is involved in many applications in physics and biology
(see e.g. \cite{benedetto,burger,li,morale,okubo,topaz} in the case $a$ linear.
It describes the behaviour of a population of particles (in physical applications) 
or cells (in biological applications) interacting under a continuous interaction potential $W$.
The quantity $\rho$ denotes the density of these particles or cells.
The function $a$ is often linear ($a(u)=\pm u$), but in several applications, such as pedestrian motion 
\cite{pieton,pieton2} 
or chemotaxis (see \cite{parma} and Section \ref{modelchemo} below) a specific nonlinearity has to be considered. 
Depending on the choice of the potential $W$ and the function $a$, one can be in the repulsive or in the attractive case,
which leads to aggregation phenomena. 

In this work we mainly focus on the case involving attractive forces.
Individuals attract one another under the action of the potential $W$, assumed to be 
smooth away of $0$ and bounded from below. 
More precisely, $W$ satisfies the following properties:
	\begin{definition}\label{assump}
We say that $W\in C^1(\RR\setminus\{0\})$ is a pointy attractive potential if
	\beq\label{hyp}
W''= -\delta_0 + w, \quad w\in C_b(\RR),
\quad \mbox{ with }\ \|w\|_{L^1(\RR)} = w_0 <\infty,
	\eeq
 in the distributional sense, where $\delta_0$ is the Dirac measure at $0$.
	\end{definition}

Attractivity in the nonlinear case is ensured provided the function $a$ satisfies
	\begin{equation}\label{hyp_a}
a\in C^1(\RR),\qquad 0\leq a'(x) \leq \alpha, \quad \forall\, x \in \RR.
	\end{equation}

This case has been extensively studied in the linear case $a=\mbox{id}$ \cite{Bertozzi1, Bertozzi2, Bertozzi3}
and it is known that if the potential $W$ has a Lipschitz singularity
then weak solutions blow up in finite time (see e.g. \cite{Bertozzi1,parma}), so that measure valued solutions arise. 
At the theoretical level, global in time existence has been obtained in the linear case $a=\mbox{id}$ 
and in any space dimension by Carrillo et al. \cite{Carrillo}, in the space 
${\mathcal P}_2$ of probability measures with finite second moment,
using the geometrical approach of gradient flows. 
In the nonlinear case, but in one space dimension, global existence of measure solutions has been obtained
by completely different means in \cite{jamesnv2}, namely thanks to the notion of duality solutions.
It has also been proved in \cite{jamesnv2} that in the linear case $a=\mbox{id}$, both notions coincide. 
The key point leading to both uniqueness of solutions and equivalence between the two notions 
is the definition of the macroscopic velocity. In the framework of
gradient flows, it is defined as the unique element with minimal norm in the subdifferential of the
interaction energy associated to $W$, where ${\mathcal P}_2$ is endowed with the Wasserstein distance
(see \cite{Ambrosio,Carrillo} for more details). 
In \cite{jamesnv2}, the macroscopic velocity is defined using the chain rule for BV functions, and this is the viewpoint
adopted for numerical analysis.

In some applications, the aggregation equation is the 
hydrodynamic limit of some kinetic sytem (see \cite{dolschmeis,filblaurpert,parma} for examples in chemotaxis).
We consider here the following kinetic model with relaxation
in hyperbolic scaling
	\begin{equation}\label{eq:kinintro}
\pa_tf_\eps + v\pa_x f_\eps = \frac 1\eps ( \rho_\eps E(v,W'*\rho_\eps) - f_\eps),  
	\end{equation}
where $f_\eps$ is the ditribution function of cells at time $t$, position $x$ and velocity $v$, 
$\rho_\eps=\int f_\eps\,dv$ and the equilibrium function $E$ is normalized so that $\int E\,dv=1$.
Existence of global in time $L^\infty$ weak solutions for such a kinetic
equation with fixed $\eps>0$ is well-known (see e.g. \cite{calvez,nv}).
However taking the limit $\eps \to 0$, we recover the aggregation equation
(\ref{eq:aggreg}) (see Theorem \ref{ConvKinet} below),
for which solutions blow up in finite time. 
Then an interesting issue consists in providing a numerical scheme
for the kinetic system \eqref{eq:kinintro} which allows to 
recover the asymptotic limit when $\eps \to 0$.
Such schemes are usually called {\it asymptotic preserving (AP)} \cite{jin2}.
They are of great interest for kinetic equations since letting $\eps\to 0$
with the mesh size and time step fixed, the scheme becomes a scheme 
for the macroscopic limit (see e.g. \cite{filbetjin,pareschi1,lemou}).
In other words, AP schemes allow a numerical discretization whose time step 
is not constrained by some constant depending on $\eps$.
We refer to \cite{jin} for a review on AP schemes.

The aim of this work is precisely to design numerical methods for (\ref{eq:aggreg}) and \eqref{eq:kinintro}
that are able to capture the measure solutions after blow-up.
The main difficulty is that after blow up the velocity $a(W'*\rho)$ is discontinuous, so that the definition
of the flux has to be considered with great care. Following the principle that holds at the continuous level,
the numerical velocity is obtained thanks a careful discretization
of the Vol'pert calculus for BV functions. We emphasize that the numerical solution may depend upon the 
way of discretizing the velocity. For equation \eqref{eq:aggreg} we work directly on the definition of
$a(W'*\rho)$, for the kinetic model, the discretization is defined through the right-hand
side of equation \eqref{eq:kinintro}. The final scheme is obtained by a splitting technique,
as in for instance \cite{pareschi1,pareschi2}, which is in this particular case very easy to implement.
A more sophisticated technique consists in using well-balanced schemes \cite{gosse3}
as it has been successfully used for chemotaxis models in 
\cite{gosse1,gosse2}. 
However, it is not clear that such schemes allow to recover the solutions after blow up.

The outline of the paper is as follows. Section \ref{sec:aggreg} is devoted
to the aggregation equation \eqref{eq:aggreg}. After recalling existence
and uniqueness result for this system, we provide a numerical scheme and 
prove its convergence.
In Section \ref{APsec}, we consider the kinetic equation \eqref{eq:kinintro}.
We first establish the rigorous derivation of the aggregation equation 
thanks to a hyperbolic limit $\eps \to 0$. Then we propose an asymptotic 
preserving scheme and prove its convergence.
Finally, Section \ref{simul} is devoted to some numerical simulations.

Part of these results were announced in \cite{hyp_proc}.

\Section{Aggregation equation}\label{sec:aggreg}
\subsection{Existence of duality solutions}

For $Y$ and $Z$ two metric spaces we denote $C_b(Y,Z)$ the set
of continuous and bounded functions from $Y$ to $Z$, 
$C_0(Y,Z)$ the set of those that vanish at infinity 
and $C_c(Y,Z)$ the set of continuous functions with compact support.
Let $\calM_b(\RR)$ be the set of bounded Radon measures
and by $\calP_1(\RR)$ the set of positive measure in $\calM_b(\RR)$
such that $\int_\RR |x|d\mu(x)<\infty$.
From now on, the space $\calM_b(\RR^N)$ is
always endowed with the weak topology $\sigma({\cal M}_b,C_0)$.
We denote $\smes :=C([0,T];{\cal M}_b(\RR^N)-\sigma({\cal M}_b,C_0))$.

Duality solutions have been introduced in \cite{bj1} to solve
scalar conservation laws with discontinuous coefficients. More precisely, 
it gives sense to measure valued solutions of the scalar conservation
law
	$$
\pa_t \rho(t,x) + \pa_x(b(t,x)\rho(t,x)) = 0,
	$$
where $b\in L^\infty((0,T)\times \RR)$ satisfies the so-called
one-sided Lipschitz condition 
\begin{equation}\label{OSLC}
\partial_x b(t,.)\leq \beta(t)\qquad\mbox{for $\beta\in L^1(0,T)$,
in the distributional sense}.
\end{equation}
This key point suggests that the velocity field should be compressive.
We refer to \cite{bj1} for the precise definition and general properties
of these solutions. 

Let us first define a notion of duality solution for the aggreagation equation \eqref{eq:aggreg}
in the spirit of \cite{BJpg,jamesnv}:
	\begin{definition}\label{defexist}
We say that $\rho\in C([0,T];\calM_b(\RR))$
is a duality solution to \eqref{eq:aggreg} if there exists 
$\widehat{a}_\rho\in L^\infty((0,T)\times\RR)$ and $\alpha\in L^1_{loc}(0,T)$ 
satisfying $\pa_x\widehat{a}_\rho\le\alpha$ in $\calD'((0,T)\times\RR)$, 
such that for all $0<t_1<t_2<T$,
	$$
\pa_t\rho + \pa_x(\widehat{a}_\rho\rho) = 0\qquad\mbox{in the sense of duality on }(t_1,t_2),
	$$
and $\widehat{a}_\rho=a(W'*\rho)$ a.e.
	\end{definition}

From now on, we denote by $A$ the antiderivative of $a$ such that $A(0)=0$.
Using the chain rule, a natural definition of the flux is
	\beq\label{DefFluxJ}
J:= -\pa_xA(W'*\rho)+a(W'*\rho)w*\rho.
	\eeq
In fact, a formal computation shows that
	$$
-\pa_xA(W'*\rho)=-a(W'*\rho)W''*\rho = a(W'*\rho)(\rho-w*\rho),
	$$
where we use \eqref{hyp} for the last identity.

Then we are in position to state the existence and uniqueness result 
of \cite{jamesnv2}.
	\begin{theorem}[\cite{jamesnv2}, Theorem 3.9]
\label{ExistFlux}
Let us assume that $\rho^{ini}$ is given in $\calP_1(\RR)$.
Under Assumptions \ref{assump} on the potential $W$ and \eqref{hyp_a}
for the nonlinear function $a$, for all $T> 0$ there exists
a unique duality solution $\rho$ of \eqref{eq:aggreg} in the sense 
of Definition \ref{defexist} with $\rho\geq 0$, $\rho(t)\in \calP_1(\RR)$
for $t\in (0,T)$ and which satisfies in the distributional sense:
	\beq\label{eqrhodis}
\pa_t \rho + \pa_x J = 0,
	\eeq
where $J$ is defined in \eqref{DefFluxJ}.
Moreover, there exists $\widehat{a}$, called universal representative, 
such that $\widehat{a}=a(W'*\rho)$ a.e. 
Then $\rho= X_\#\rho^{ini}$, where $X$ is the Filippov flow associated 
to the velocity $\widehat{a}$.
	\end{theorem}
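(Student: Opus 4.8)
The plan is to decouple the nonlinearity: for a \emph{frozen} density the problem reduces to a linear transport equation with a fixed, discontinuous velocity, for which the duality-solution theory of \cite{bj1} applies, and then to close the problem by a fixed-point argument on the velocity. The first thing I would check is that the candidate velocity is compressive. Writing $\widehat{a}=a(W'*\rho)$ and using $W''=-\delta_0+w$ together with \eqref{hyp_a} and $\rho\ge0$ gives
\begin{equation}
\pa_x\widehat{a}=a'(W'*\rho)\,(W''*\rho)=-a'(W'*\rho)\,\rho+a'(W'*\rho)\,(w*\rho)\le \alpha\,\|w\|_\infty\,\|\rho^{ini}\|_{\calM_b},
\end{equation}
since the first term is nonpositive and $0\le a'\le\alpha$. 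As the mass is conserved, this yields the one-sided Lipschitz condition \eqref{OSLC} with a constant (hence $L^1_{loc}$) bound, which is exactly the structure required by Definition \ref{defexist}.

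For a given $\rho$ with the above velocity $\widehat{a}=a(W'*\rho)$, I would then invoke \cite{bj1} to build the Filippov flow $X$ associated with $\widehat{a}$ and to define the unique duality solution of $\pa_t\sigma+\pa_x(\widehat a\sigma)=0$ as the push-forward $\sigma=X_\#\rho^{ini}$. Positivity and conservation of mass are immediate from the push-forward structure, while finiteness of the first moment on $(0,T)$ follows from the boundedness of $\widehat a$, which makes the support spread at most linearly in time; so $\sigma(t)\in\calP_1(\RR)$.

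To restore the nonlinearity I would measure distances through the one-dimensional monotone rearrangement: for $\rho(t)\in\calP_1$ let $u(t,\cdot)$ be the pseudo-inverse of its cumulative distribution on the mass interval $[0,M]$, so that $\calW_1(\rho_1,\rho_2)=\|u_1-u_2\|_{L^1}$ and the Filippov flow becomes the monotone ODE flow $\pa_t u(z)=a\!\left(\int_0^M W'(u(z)-u(z'))\,dz'\right)$. Comparing two solutions and splitting $W'$ into its jump part (coming from $-\delta_0$, i.e. $-H$ with $H$ the Heaviside function) and its Lipschitz part (with derivative $w$), the crucial observation is that, by monotonicity of $u$ in $z$, the jump part contributes $\int_0^M H(u(z)-u(z'))\,dz'$, which equals the mass fraction lying below $z$ and is therefore \emph{independent of the configuration} (the remaining ambiguity on flat parts of $u$ being precisely the choice of universal representative). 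It thus cancels in the difference of the two velocities, leaving only the Lipschitz part and yielding $\frac{d}{dt}\|u_1-u_2\|_{L^1}\le 2\alpha\|w\|_\infty\|u_1-u_2\|_{L^1}$; Gronwall then gives uniqueness and, via Picard iteration, a fixed point on $[0,T]$ for any $T$. This is the heart of the matter and the main obstacle: when blow-up occurs $\widehat a$ is discontinuous exactly where $\rho$ concentrates, so neither the product $\widehat a\rho$ nor the map $\rho\mapsto a(W'*\rho)$ is continuous in the naive sense, and it is the rearrangement — which shows that the most compressive (Dirac) part of the velocity is governed solely by the mass variable — that makes the contraction estimate go through.

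Finally I would identify the flux. The universal representative $\widehat a$ assigns a canonical value to $a(W'*\rho)$ on the at most countable jump set, giving a meaning to the product $\widehat a\rho$; the chain rule (Vol'pert calculus) then turns $-\pa_x A(W'*\rho)$ into $a(W'*\rho)(\rho-w*\rho)$ and recovers $J$ as in \eqref{DefFluxJ}, so that \eqref{eqrhodis} holds in $\calD'$. Uniqueness of $\rho$ follows by combining the uniqueness of the fixed point of the previous step with the uniqueness of duality solutions for a frozen velocity \cite{bj1}.
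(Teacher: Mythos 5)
The paper itself gives no proof of this theorem --- it is imported from \cite{jamesnv2} (Theorem 3.9) --- so your proposal can only be compared with the cited proof. Your architecture is the right skeleton and matches it in spirit: the one-sided Lipschitz estimate from $\rho\ge 0$, $0\le a'\le\alpha$ and $W''=-\delta_0+w$; the linear duality theory of \cite{bj1} for a frozen velocity; and the key cancellation of the Heaviside part of $W'$, which \cite{jamesnv2} exploits in Lagrangian form (the cumulative distribution is transported by the flow, so the singular parts of the two velocities agree along the trajectories) and which you phrase equivalently in mass coordinates via the pseudo-inverse. Your OSL computation, the identification $J=\widehat{a}_V\rho$ (the $w*\rho$ contributions cancel as measures because $w*\rho\,dx$ is absolutely continuous and $\widehat{a}_V=a(W'*\rho)$ Lebesgue-a.e.), and the $\calW_1$ set-up are all sound, up to a harmless missing factor $M$ in your contraction constant ($2\alpha M\|w\|_\infty$ rather than $2\alpha\|w\|_\infty$).

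There is, however, a genuine gap precisely where you locate ``the heart of the matter''. The ODE $\pa_t u(z)=a\big(\int_0^M W'(u(z)-u(z'))\,dz'\big)$ does \emph{not} preserve the cone of monotone functions once atoms form: with your tie convention the velocity on a flat piece of $u$ equals $a(-z+c)$, which is nonincreasing in $z$ since $a$ is nondecreasing, so the back of an atom instantaneously overtakes the front and $u$ leaves the monotone cone --- after which both the identity $\calW_1(\rho_1,\rho_2)=\|u_1-u_2\|_{L^1}$ and your jump cancellation are unavailable. Hence the Picard iteration as described is ill defined exactly in the post-blow-up regime, which is the only regime at stake. The correct dynamics replaces the velocity on each flat piece by its average, i.e.\ the Vol'pert value $\big(A(\pa_xS(x^+))-A(\pa_xS(x^-))\big)/\big(\pa_xS(x^+)-\pa_xS(x^-)\big)$ --- exactly what \eqref{DefFluxJ} encodes and \eqref{eq:ai12} discretizes --- and both existence (obtained in \cite{jamesnv2} by approximation together with the weak stability of duality solutions, not by Picard iteration) and the Gronwall estimate must be run for this projected field; flat-piece averaging is nonexpansive, but comparing two solutions with \emph{different} flat structures requires an argument you have not supplied. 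Relatedly, your uniqueness claim is not closed in the full class of Definition \ref{defexist}: the definition fixes $\widehat{a}_\rho$ only Lebesgue-a.e., which says nothing at the atoms, so you must show that the flux constraint \eqref{DefFluxJ} \emph{forces} the value of $\widehat{a}$ on the atoms to be the Vol'pert average (via $\pa_xA(W'*\rho)=\widehat{a}_V\,\pa_x(W'*\rho)$ in the sense of measures), so that \emph{every} duality solution with this flux --- not merely the fixed points of your iteration --- has a pseudo-inverse obeying the projected ODE. You gesture at this in your final paragraph but do not carry it out, and without it the Gronwall comparison does not apply to an arbitrary competitor solution.
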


\subsection{Numerical discretization}
Let us consider a uniform space discretization with step $\delta x$ and
denote by $\delta t$ the time step; 
then $t_n=n \delta t$ and $x_i=x_0+i\delta x$, $i=0,\ldots,N_x$. 
We assume that the initial datum $\rho^{ini}$ is
compactly supported with support included in $[x_0,x_{N_x}]$,
and since this work is not concerned with boundary conditions, we assume as well
that the solutions are compactly supported 
in the computational domain during the computational time. 
Then from now on, we take $\rho_0^n=S_0^n=S_1^n=J_{-1/2}^n=0$ and
$\rho_{N_x}^n=S_{N_x}^n=J_{N_x+1/2}^n=0$.

For $n\in \NN$, we assume to have computed an approximation 
$(\rho_i^n)_{i=0,\ldots,N_x}$ of $(\rho(t_n,x_i))_{i=0,\ldots,N_x}$, 
we denote by $(S_i^n)_{i=0,\ldots,N_x}$ an approximation of 
$(W*\rho(t_n,x_i))_{i=0,\ldots,N_x}$ and by $(\nu_i^n)_{i=0,\ldots,N_x}$
an approximation of $(w*\rho(t_n,x_i))_{i=0,\ldots,N_x}$.
Let us denote $\lambda = \delta t/\delta x$ and 
$M$ the total mass of the system, $M= |\rho^{ini}|(\RR)$.
We obtain an approximation of $\rho(t_{n+1},x_i)$ denoted $\rho_i^{n+1}$
by using the following Lax-Friedrichs discretization of equation 
(\ref{eqrhodis})--(\ref{DefFluxJ}):
	\begin{align}
\label{eq:rhoprime}
\rho_i^{n+1} = \rho_i^n - \frac{\lambda}{2} (J_{i+1/2}^n -J_{i-1/2}^n)
	+ \frac{\lambda}{2} c (\rho_{i+1}^n - 2\rho_i^n + \rho_{i+1}^n) \\
\label{eq:Jdis}
J_{i+1/2}^n = -\frac{A(\pa_xS_{i+1}^n)-A(\pa_xS_i^n)}{\delta x} +
	a_{i+1/2}^n \frac{\nu_{i+1}^n+\nu_i^n}{2},
	\end{align}
where we have defined 
	\beq\label{defc}
c:= \max_{x\in [-M(1+w_0),M(1+w_0)]}|a(x)|.
	\eeq
In this scheme, we use the discretization 
	\begin{equation}\label{paW}
\pa_xS_{i+1}^n=\frac{S_{i+2}^n-S_i^n}{2\delta x},
	\end{equation}
and the approximation
	\begin{equation}\label{eq:ai12}
  a_{i+1/2}^n = \left\{\begin{array}{ll}
 0 &  \mbox{ if }\ \pa_xS_{i+1}^n= \pa_xS_i^n,  \\[2mm]
\displaystyle \frac{A(\pa_xS_{i+1}^n)-A(\pa_xS_i^n)}{\pa_xS_{i+1}^n-\pa_xS_i^n} \quad & \mbox{ otherwise.} 
\end{array}\right.
	\end{equation}
We need now a scheme for $S_i^n$. From Assumption 
\ref{assump}, we deduce by taking the convolution of \eqref{hyp} with
$\rho$ that $-W''*\rho + w*\rho = \rho$.
This equation is discretized by using a standard finite difference
scheme:
	\begin{equation}\label{eq:Vwdis}
-\frac{S_{i+1}^n-2S_i^n+S_{i-1}^n}{\delta x^2}+\nu_i^n = \rho_i^n.
	\end{equation}
This scheme allows the computation of $(S_i^n)_i$, provided $(\nu_i^n)_i$ is 
known. For the computation of $(\nu_i^n)_i$, there are multiple ways; 
here we propose to use a piecewise constant
approximation for $\rho$ on each interval $[x_i,x_{i+1})$, so that
	$$
\nu_i^n = \sum_{k=1}^{N_x} \int_{x_k}^{x_{k+1}} \rho_i^n w(x_i-y) \,dy,
	$$
which can be rewritten
\beq\label{eq:nu}
\nu_i^n = \sum_{k=1}^{N_x} \rho_k^n w_{ki},\quad w_{ki}=\int_{x_i-x_{k+1}}^{x_i-x_k} w(z)\,dz= \int_{(i-1-k)\delta x}^{(i-k)\delta x} w(z)\,dz.
\eeq

Now the final version of the scheme for $\rho_i^n$ can be written. Using (\ref{paW}) and (\ref{eq:Vwdis}), 
we deduce that we can rewrite (\ref{eq:Jdis}) as
	\begin{equation}\label{eqJa}
J_{i+1/2}^n = a_{i+1/2}^n \frac{\rho_{i+1}^n+\rho_i^n}{2}.
	\end{equation}
Injecting this latter expression of the flux in (\ref{eq:rhoprime}) we obtain
	\begin{equation}\label{eq:rhoprimebis}
\rho^{n+1}_i = \rho_i^n\big(1-\lambda c+\frac\lambda 4(a_{i-1/2}^n-a_{i+1/2}^n)\big)
+\frac{\lambda}{2}\left(c+\frac{a_{i-1/2}^n}{2}\right)\rho_{i-1}^n 
+\frac{\lambda}{2}\left(c-\frac{a_{i+1/2}^n}{2}\right)\rho_{i+1}^n.
	\end{equation}
We emphasize at this point the importance of the choice of the discretization
of the macroscopic velocity $a_{i+1/2}^n$ in \eqref{eq:ai12} as it is the one
corresponding to the universal representative $\achapo$ of Theorem \ref{ExistFlux}.
Numerical example showing a wrong dynamics with 
a different discretization choice will be provided in Section \ref{modelchemo}.

\begin{remark}
The choice of the discretization \eqref{eq:ai12} 
for the macroscopic velocity can be seen as a 
consequence of the chain rule (or Vol'pert calculus)
for $BV$ functions \cite{volpert} (see also remark 3.98 of \cite{ambrosioBV}):
for a BV function $u$, the fonction $\achapo_V$ defining the chain rule 
$\pa_xA(u)=\achapo_V \pa_xu$ is constructed by
	\begin{equation}\label{eq:volpert1}
\achapo_V(x) = \int_0^1 a(tu_1(x)+ (1-t)u_2(x))\,dt, 
	\end{equation}
where 
	\begin{equation}\label{eq:volpert2}
(u_1,u_2)=\left\{\begin{array}{ll}
\ds (u,u) & \qquad \mbox{ if } x\in \RR\setminus S_u, \\[2mm]
\ds (u^+,u^-) & \qquad \mbox{ if } x\in J_u, \\[2mm]
\ds \mbox{ arbitrary } & \qquad \mbox{elsewhere.}
\end{array}\right.
	\end{equation}
We have denoted by $S_u$ the set of $x\in \RR$ where $u$ does not admit an approximate limit and by $J_u\subset S_u$ the set of jump points.
Applying that to $u=\pa_xS$, we obtain,
	\begin{equation}\label{aVolpert}
\achapo_V(x) = \left\{\begin{array}{ll}
\ds a(\pa_xS(x)) & \mbox{ if }x\in \RR\setminus S_u, \\[2mm]
\ds \frac{A(\pa_xS(x^+))-A(\pa_xS(x^-))}{\pa_xS(x^+)-\pa_xS(x^-)}& \mbox{ if } x\in J_u, \\[3mm]
\ds \mbox{ arbitrary } & \mbox{elsewhere.}
	\end{array}\right.
	\end{equation}
\end{remark}

\subsection{Numerical analysis}
In this subsection, we prove the convergence of the numerical scheme 
defined in \eqref{eq:rhoprime}--\eqref{eq:ai12} towards the unique 
duality solution of Theorem \ref{ExistFlux}.
We first state a Lemma which proves a CFL-like condition for the scheme:
	\begin{lemma}\label{lem:rhopos}
Let us assume that (\ref{hyp_a}) holds and that the condition
	\begin{equation}\label{CFL}
\lambda:= \frac{\delta t}{\delta x} \leq \frac{2}{3c},
	\end{equation}
is satisfied with $c$ defined in \eqref{defc}.
Let us assume that $\rho^{ini}\in \calP_1(\RR)$ is given, compactly
supported and nonnegative, and we define 
$\rho_i^0=\frac{1}{\delta x}\int_{x_i}^{x_{i+1}} \rho^{ini}(dx)\geq 0$.

Then for all $i$ and $n\in \NN$, the sequences computed thanks
to the scheme defined in (\ref{eq:rhoprime})--(\ref{eq:Vwdis}) satisfy
	$$
\rho_i^n \geq 0, \qquad |a_{i+1/2}^n|\leq c.
	$$
	\end{lemma}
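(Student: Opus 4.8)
The plan is to argue by induction on the time level $n$, establishing simultaneously that $\rho_i^n\ge 0$ for all $i$ and that $|a_{i+1/2}^n|\le c$. The base case $n=0$ is immediate, since $\rho_i^0\ge 0$ by definition of the initial projection and the velocity bound follows from the very estimate used in the inductive step. At a generic step, assuming $\rho_i^n\ge 0$ for every $i$, I would first prove the velocity bound $|a_{i+1/2}^n|\le c$, and then feed it together with the CFL condition \eqref{CFL} into the explicit update \eqref{eq:rhoprimebis} to conclude that $\rho_i^{n+1}$ is a nonnegative combination of $\rho_{i-1}^n,\rho_i^n,\rho_{i+1}^n$, hence nonnegative.

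The heart of the matter is the velocity bound. Since $A'=a$, the difference quotient in \eqref{eq:ai12} is an average, $a_{i+1/2}^n=\int_0^1 a\big(t\,\pa_xS_{i+1}^n+(1-t)\pa_xS_i^n\big)\,dt$, whose argument runs over the segment joining $\pa_xS_i^n$ and $\pa_xS_{i+1}^n$; hence by \eqref{defc} everything reduces to showing $|\pa_xS_i^n|\le M(1+w_0)$. For this I would rewrite the discrete elliptic equation \eqref{eq:Vwdis} in terms of $d_{i+1/2}:=(S_{i+1}^n-S_i^n)/\delta x$ as $d_{i+1/2}-d_{i-1/2}=\delta x(\nu_i^n-\rho_i^n)$, and telescope from the left boundary, where $S_0^n=S_1^n=0$ gives $d_{1/2}=0$, to obtain $d_{i+1/2}=\delta x\sum_{k\le i}(\nu_k^n-\rho_k^n)$. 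Two ingredients then close the estimate: the induction hypothesis $\rho_k^n\ge 0$ together with mass conservation $\delta x\sum_k\rho_k^n=M$, giving $\delta x\sum_{k\le i}\rho_k^n\le M$; and the convolution bound $\delta x\sum_i|\nu_i^n|\le w_0 M$, obtained from \eqref{eq:nu} since the intervals defining $w_{ki}$ tile $\RR$, so $\sum_i|w_{ki}|\le\|w\|_{L^1}=w_0$. Combining the two yields $|d_{i+1/2}|\le M(1+w_0)$, and as $\pa_xS_i^n=\tfrac12(d_{i+1/2}+d_{i-1/2})$ is the average of two such quantities, the desired bound follows.

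With $|a_{i\pm 1/2}^n|\le c$ in hand, positivity is read off \eqref{eq:rhoprimebis}. The off-diagonal coefficients $\frac{\lambda}{2}\big(c+\tfrac12 a_{i-1/2}^n\big)$ and $\frac{\lambda}{2}\big(c-\tfrac12 a_{i+1/2}^n\big)$ are each bounded below by $\frac{\lambda c}{4}\ge 0$, while the diagonal coefficient satisfies $1-\lambda c+\tfrac{\lambda}{4}(a_{i-1/2}^n-a_{i+1/2}^n)\ge 1-\lambda c-\tfrac{\lambda c}{2}=1-\tfrac32\lambda c$, which is nonnegative exactly under \eqref{CFL}. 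Hence $\rho_i^{n+1}\ge 0$, closing the induction. The mass conservation invoked above is itself obtained, independently of signs, by summing the conservative form \eqref{eq:rhoprime} over $i$ and telescoping the flux and numerical-diffusion terms, the boundary contributions vanishing by the compact-support convention, together with $\delta x\sum_i\rho_i^0=\int\rho^{ini}=M$.

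The only genuinely delicate step is the velocity estimate $|\pa_xS_i^n|\le M(1+w_0)$: it is where the discrete elliptic solve, the sign information from the induction hypothesis, mass conservation, and the $L^1$ control of $w$ must all be combined, and where the boundary conventions $S_0^n=S_1^n=0$ are used to initialize the telescoping. Once that bound is secured, the positivity step is purely algebraic, and the CFL constant $2/(3c)$ emerges precisely from balancing the diagonal coefficient.
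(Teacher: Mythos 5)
Your proof is correct and follows essentially the same route as the paper: induction on $n$, the bound $|\pa_xS_i^n|\le M(1+w_0)$ obtained by telescoping the discrete elliptic relation \eqref{eq:Vwdis} and combining nonnegativity of $\rho^n$, mass conservation of the conservative scheme, and the $L^1$ control $\delta x\sum_i|\nu_i^n|\le Mw_0$, followed by positivity of $\rho_i^{n+1}$ read off \eqref{eq:rhoprimebis} under the CFL condition \eqref{CFL}. The only cosmetic deviations are that you write $a_{i+1/2}^n$ as an integral average of $a$ along the segment joining $\pa_xS_i^n$ and $\pa_xS_{i+1}^n$ where the paper invokes the mean value theorem, and that you telescope in the half-integer differences $d_{i+1/2}=(S_{i+1}^n-S_i^n)/\delta x$ rather than directly in the centered quantities $\pa_xS_i^n$ as in \eqref{eq:paxSM}.
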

\begin{proof}
We choose $x_0$ and $x_{N_x}$ such that supp$(\rho^{ini})\subset [x_0,x_{N_x}]$.
Let us define $M_i^n=\delta x\sum_{j=0}^i \rho_j^n$ and 
$M^{n+1}_i=\delta x\sum_{j=0}^i \rho^{n+1}_j$.
Since the scheme (\ref{eq:rhoprime}) is conservative, 
we have $M_{N_x}^n=M_{N_x}^0=M$.
Clearly, $\rho_i^n=(M_i^n-M_{i-1}^n)/\delta x$ and from (\ref{eqJa}) we have
$J_{i+1/2}^n=a_{i+1/2}^n(M_{i+1}^n-M_{i-1}^n)/(2\delta x)$.
Then we deduce from (\ref{eq:rhoprime}) that
\begin{equation}\label{eq:Mprime}
M_i^{n+1} = (1-\lambda c)M_i^n + \frac{\lambda}{2}\left(c-\frac{a_{i+1/2}^n}{2}\right)M_{i-1}^n
+ \frac{\lambda}{2}\left(c+\frac{a_{i+1/2}^n}{2}\right)M_{i+1}^n.
\end{equation}

By definition of $\nu_i^n$ in \eqref{eq:nu}, we deduce from \eqref{hyp} that
for all $i\in \NN^*$, 
\beq\label{eq:boundnu}
\delta x \sum_{j=1}^i \big| \nu_j^n \big| \leq \delta x 
\sum_{k=1}^{N_x} \rho_k^n w_0 = M w_0.
\eeq
Moreover, from the definition of $\pa_xS_i^n$ in \eqref{paW} and using equation 
\eqref{eq:Vwdis}, we deduce 
	$$
-\frac{\pa_xS_{i+1}^n-\pa_xS_i^n}{\delta x} 
+ \frac{\nu_{i+1}^n +\nu_i^n}{2} = \frac{\rho_{i+1}^n+\rho_i^n}{2}.
	$$
Summing this latter equation over $i$, we obtain
	$$
\pa_xS_{i+1}^n=\pa_xS_0^n + \frac 12 \big(-M_{i+1}^n-M_i^n+M_0^n +
\delta x \big(2\sum_{j=0}^i \nu_j^n +\nu_{i+1}^n-\nu_0^n\big) \big).
	$$
Using the boundary conditions, we have $\pa_xS_0^n=0$ and $M_0^n=0$.
Then,
\beq\label{eq:paxSM}
\pa_xS_{i+1}^n = -\frac 12 \big(M_{i+1}^n+M_i^n -
\delta x\big(2\sum_{j=0}^i \nu_j^n +\nu_{i+1}^n-\nu_0^n\big) \big).
\eeq

We are now in position to prove the lemma by an induction on $n$.
For $n=0$, by construction of the initial data, we have $\rho_i^0\geq 0$.
Then for all $i$, we have $0\leq M_i^0\leq M$ and with \eqref{eq:paxSM}
and \eqref{eq:boundnu} we deduce that
	$$
|\pa_xS_{i+1}^0| \leq M(1+w_0), \quad \mbox{ for all } i.
	$$
Futhermore, since we have
	$$
\frac{A(\pa_xS_{i+1}^0)-A(\pa_xS_i^0)}{\pa_xS_{i+1}^0-\pa_xS_i^0} = a(\theta_i^0), 
	\qquad \theta_i^0\in (\pa_xS_i^0,\pa_xS_{i+1}^0)\subset (-M(1+w_0),M(1+w_0)),
	$$
we deduce with \eqref{eq:ai12} that $|a_{i+1/2}^0|\leq c$, which proves the result for $n=0$.

Let us assume that $\rho_i^n\geq 0$ and $|a_{i+1/2}^n|\leq c$, for 
some $n\in \NN$. 
From condition \eqref{CFL} and the induction assumption
$|a_{i+1/2}^n|\leq c$, we deduce that in the scheme \eqref{eq:rhoprimebis},
all the coefficients in front of $\rho_{i-1}^n$, $\rho_i^n$ 
and $\rho_{i+1}^n$ are nonnegative.
Thus $\rho_i^{n+1}\geq 0$ for all $i$.
Moreover, we have clearly by definition that $0\leq M_i^n\leq M$.
Then, from the condition \eqref{CFL} and induction assumption 
$|a_{i+1/2}^n|\leq c$, we deduce with \eqref{eq:Mprime} that 
$M^{n+1}_i$ is a convex combination of $M^n_{i+1}$, $M_i^n$ and $M_{i-1}^n$. 
Then $0\leq M_i^{n+1}\leq M$.
Thus, as above, using \eqref{eq:paxSM} with $n+1$ instead of $n$,
we have $|\pa_xS_{i+1}^{n+1}|\leq M(1+w_0)$, which implies
$|a_{i+1/2}^{n+1}|\leq c$.
\end{proof}

Let us define the reconstruction
	$$
\rho_\delta(t,x) = \sum_{n\in\NN} \sum_{i=0}^{N_x} \rho_i^n {\bf 1}_{[n\delta t,(n+1)\delta t)\times [x_i,x_{i+1})}(t,x),
	$$
and $S_\delta$, $\pa_xS_\delta$, $\nu_\delta$, $J_\delta$ and $a_\delta$ 
are defined in a similar way thanks to $(S_i^n)_i$, $(\pa_xS_i^n)_i$, 
$(\nu_i^n)_i$, $(J_{i+1/2}^n)_i$ and $(a_{i+1/2}^n)_i$ respectively.
Then we have the following convergence result:
\begin{theorem}\label{th:convmacro}
Let us assume that $\rho^{ini}\in \calP_1(\RR)$ is given,
compactly supported and nonnegative
and define $\rho_i^0=\frac{1}{\delta x}\int_{x_i}^{x_{i+1}} \rho^{ini}(dx)\geq 0$.
Under assumption (\ref{hyp_a}), if (\ref{CFL}) is satisfied, 
then the discretization $\rho_\delta$ converges in $\smes$ towards the 
unique duality solution $\rho$ of Theorem \ref{ExistFlux} 
as $\delta t$ and $\delta x$ go to $0$.
\end{theorem}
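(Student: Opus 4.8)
The plan is to follow the classical route for finite volume schemes converging to measure solutions: derive uniform a priori estimates, extract a limit by compactness in $\smes$, pass to the limit in the discrete equation to obtain a distributional solution, and finally identify that limit as \emph{the} duality solution of Theorem \ref{ExistFlux} through the one-sided Lipschitz bound and the uniqueness statement there. First I would assemble the estimates. Lemma \ref{lem:rhopos} already provides $\rho_i^n \geq 0$ and the velocity bound $|a_{i+1/2}^n| \leq c$, while the conservative form of \eqref{eq:rhoprime} gives mass conservation $\delta x \sum_i \rho_i^n = M$ for all $n$. To keep $\rho_\delta(t,\cdot)$ in $\calP_1(\RR)$ and to guarantee tightness of the family, I would propagate a uniform bound on the first moment $\delta x \sum_i |x_i|\,\rho_i^n$, which grows at most linearly in $t$ since the numerical velocity is bounded by $c$ and the numerical viscosity coefficient is $O(\delta x)$. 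Together with mass conservation this yields, for each fixed $t$, weak-$*$ relative compactness of $\{\rho_\delta(t,\cdot)\}$ in $\calM_b(\RR)$ by Prokhorov's theorem.

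Next I would establish equicontinuity in time. Testing the scheme against a smooth compactly supported $\varphi$ and using the flux bound $|J_{i+1/2}^n| \leq c\,(\rho_{i+1}^n+\rho_i^n)/2$, which follows from \eqref{eqJa} and $|a_{i+1/2}^n|\le c$, together with the vanishing of the discrete viscosity contribution, one bounds the increment $|\langle \rho_\delta(t,\cdot)-\rho_\delta(s,\cdot),\varphi\rangle|$ by $C\,\|\varphi'\|_\infty\,|t-s|$ uniformly in $\delta$. Combined with the spatial compactness above, a refined Arzel\`a--Ascoli argument (with a diagonal extraction over a countable dense set of test functions) produces a subsequence converging in $\smes$ to some $\rho \in C([0,T];\calP_1(\RR))$, nonnegative by Lemma \ref{lem:rhopos}.

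The core step is consistency. Since $(S_i^n)$ solves the discrete elliptic relation \eqref{eq:Vwdis} and $\nu_\delta$ is built from $\rho_\delta$ via \eqref{eq:nu}, I would first pass to the limit in these linear, regularizing relations: from $\rho_\delta \to \rho$ one gets $\nu_\delta \to w*\rho$, $S_\delta \to W*\rho$, and, crucially, the \emph{strong} convergence $\pa_xS_\delta \to W'*\rho$ in $L^1_{loc}$, the discrete second-order operator gaining one derivative so that $\pa_xS_\delta$ is compact. This strong convergence is exactly what controls the nonlinearity: in the form \eqref{eq:Jdis}, the difference quotient $(A(\pa_xS_{i+1}^n)-A(\pa_xS_i^n))/\delta x$ converges to $\pa_xA(W'*\rho)$, while $a_{i+1/2}^n$, which by \eqref{eq:ai12} is precisely the Vol'pert average of $a$ across a jump, converges a.e.\ to the universal representative $\achapo$. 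Hence $J_\delta \to J = -\pa_xA(W'*\rho)+\achapo\,w*\rho$ and the limit satisfies \eqref{eqrhodis}--\eqref{DefFluxJ} in the distributional sense.

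Finally I would verify the one-sided Lipschitz condition \eqref{OSLC} for the limiting velocity, which upgrades the distributional solution to a duality solution in the sense of Definition \ref{defexist}. The discrete analogue rests on $a'\geq 0$ from \eqref{hyp_a} and on the identity \eqref{eq:paxSM}, which expresses $\pa_xS$ through the cumulative mass and therefore has controlled increments; this yields a uniform bound $\pa_x\achapo_\delta \leq \alpha$ surviving in the limit. The limit $\rho$ is then a nonnegative duality solution in $\calP_1(\RR)$, so by the uniqueness part of Theorem \ref{ExistFlux} it equals the unique such solution; since every subsequence shares this limit, the whole family $\rho_\delta$ converges. The hard part will be the consistency step, and within it the passage to the limit in the nonlinear, possibly discontinuous flux: one must show that $a_{i+1/2}^n$ converges to the correct representative $\achapo$ and not to some other average of $a$. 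This is precisely where the Vol'pert-type choice \eqref{eq:ai12} is indispensable, and where the strong convergence of $\pa_xS_\delta$, rather than the mere weak convergence of $\rho_\delta$, carries the argument.
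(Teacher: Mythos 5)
Your proposal is correct in substance and shares the paper's spine (positivity and velocity bound from Lemma \ref{lem:rhopos}, strong compactness of $\pa_xS_\delta$, weak-$*$ limit of $a_\delta$ identified with $a(\pa_xS)$ a.e., distributional limit of the flux, conclusion by uniqueness), but it diverges at two genuine points. First, for compactness the paper never argues on $\rho_\delta$ directly: it introduces the cumulative mass $M_i^n=\delta x\sum_{j\le i}\rho_j^n$, observes from \eqref{eq:Mprime} that the scheme is monotone (TVD) for $(M_i^n)$ under \eqref{CFL}, extracts $M_\delta\to\widetilde M$ in $L^1_{loc}$ from the $L^\infty\cap BV$ bound, and sets $\rho=\pa_x\widetilde M$; the same bound is then transferred to $\pa_xS_\delta$ through the identity \eqref{eq:paxSM}, which is how the crucial strong $L^1_{loc}$ convergence of $\pa_xS_\delta$ is actually obtained. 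Your tightness--plus--time-equicontinuity--plus--Arzel\`a--Ascoli route reaches the same conclusion and is more generic, but your justification of the strong convergence of $\pa_xS_\delta$ (``the discrete second-order operator gains one derivative'') is the one step you should make precise, and \eqref{eq:paxSM} is exactly the tool: in 1D the gain of a derivative is the passage from $\rho$ to its primitive $M$. Second, the endgame differs: the paper passes to the limit in \eqref{eq:rhoprime} to get a distributional solution of \eqref{eqrhodis}--\eqref{DefFluxJ}, whose flux $J=-\pa_xA(\pa_xS)+\widetilde a\,(w*\rho)$ never involves the ill-defined product $\widehat a\,\rho$, and concludes directly by the uniqueness attached to Theorem \ref{ExistFlux}; it does \emph{not} use the OSL bound in this proof (the discrete OSL is Proposition \ref{adisOSL}, stated afterwards as a separate remark). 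Your route --- verify a one-sided Lipschitz bound surviving in the limit, upgrade to a duality solution in the sense of Definition \ref{defexist}, then invoke uniqueness of duality solutions --- is viable, but it requires one more ingredient you leave implicit: rewriting $J$ as $\widehat a_V\rho$ via the continuous Vol'pert chain rule \eqref{aVolpert} so that Definition \ref{defexist} applies.

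One claim you should excise or repair: that $a_{i+1/2}^n$ ``converges a.e.\ to the universal representative $\achapo$.'' What the strong $L^1_{loc}$ convergence of $\pa_xS_\delta$ buys is only the identification $\widetilde a=a(\pa_xS)$ Lebesgue-a.e., which says nothing at the jump points of $\pa_xS$, i.e.\ precisely at the atoms of $\rho$ where $\achapo$ matters for the product $\achapo\,\rho$. The paper's proof is arranged so that this pointwise identification is never needed: the singular part of the flux is carried by the exact discrete chain rule built into \eqref{eq:ai12} (the flux \eqref{eq:Jdis} \emph{is} a difference quotient of $A(\pa_xS)$), so only the absolutely continuous term $\widetilde a\,(w*\rho)$ requires identifying $\widetilde a$, and a.e.\ equality suffices there. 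Your closing intuition --- that the Vol'pert-type choice \eqref{eq:ai12} is what makes the scheme select the correct representative --- is right, but the mechanism is this exactness of the discrete chain rule, not a pointwise convergence of $a_\delta$ at the jumps.
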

\begin{proof}
As above, we assume that supp$(\rho^{ini})\subset[x_0,x_{N_x}]$.
Applying Lemma \ref{lem:rhopos}, we have that 
$(\rho_i^n)_i$ is nonnegative and $|a_{i+1/2}^n|\leq c$, 
provided \eqref{CFL} is satisfied.
As in the proof of Lemma \ref{lem:rhopos}, we define 
$M_i^n=\delta x\sum_{j=0}^i \rho_j^n$, which satisfies \eqref{eq:Mprime}
and $0\leq M_i^n \leq M$.
Moreover, we clearly have that $0\leq \rho_i^n=(M_i^n-M_{i-1}^n)/\delta x$, 
then equation (\ref{eq:rhoprimebis}) implies a $BV(\RR)$ estimate on 
$(M_i^n)_i$, provided \eqref{CFL} is satisfied.
More precisely the scheme is TVD for the sequence $(M_i^n)_i$.

Defining
	$$
M_\delta(t,x) = \sum_{n\in\NN} \sum_{i=0}^{N_x} M_i^n 
{\bf 1}_{[n\delta t,(n+1)\delta t)\times [x_i,x_{i+1})}(t,x),
	$$
we deduce from standard techniques that we have a 
$L^\infty\cap BV((0,T)\times\RR)$ estimate on $M_\delta$. 
It implies the convergence, up to a subsequence, 
of $M_\delta$ in $L^1_{loc}(\RR^+\times \RR)$ towards a function 
$\widetilde{M}\in L^\infty\cap BV((0,T)\times\RR)$ 
when $\delta t$ and $\delta x$ go to $0$ and satisfy (\ref{CFL}).

Let us define $\rho = \pa_x\widetilde{M} \in L^\infty((0,T);\calM_b(\RR))$. 
Obviously, noting that $\rho_i^n=(M_i^n-M_{i-1}^n)/\delta x$, we deduce that
$\rho$ is the limit in $\smes$ of $\rho_\delta$.
By definition \eqref{eq:nu}, we have that $\nu_\delta = w*\rho_\delta$.
Therefore, the sequence $(\nu_\delta)_\delta$ converges,
up to a subsequence, towards $\nu:=w*\rho$ for a.e. $t>0$ and $x\in \RR$.

From \eqref{eq:paxSM}, we deduce that we have the same bound on the sequence 
$(\pa_xS_i^n)_{i,n}$ as on $(M_i^n)_{i,n}$.
We conclude that the sequence $(\pa_xS_i^n)_{i,n}$ is bounded in
$L^\infty\cap BV((0,T)\times\RR)$. 
As above, we get the convergence, up to a subsequence,
in $L^1_{loc}(\RR^+\times \RR)$ of $\pa_xS_\delta$ towards a function 
$\pa_xS$ belonging to $L^\infty\cap BV((0,T)\times\RR)$
as $\delta t$ and $\delta x$ go to $0$ and satisfy \eqref{CFL}.
By definition of $\pa_xS_\delta$, we have the strong convergence 
up to a subsequence in $L^1_{loc}(\RR^+,W^{1,1}_{loc}(\RR))$ 
of $S_\delta$ towards $S$.

Passing to the limit in the equation \eqref{eq:Vwdis} we deduce that 
$S$ and $w$ satisfy in the weak sense the equation
	$$
-\pa_{xx} S + w = \rho.
	$$
Moreover, from Lemma \ref{lem:rhopos},
we deduce that the sequence $(a_\delta)_\delta$ is bounded in $L^\infty$,
thus we can extract a subsequence converging
in $L^\infty -weak^*$ towards $\widetilde{a}$.
From the $L^1_{loc}$ convergence of $(\pa_xS_\delta)_\delta$, we deduce that 
$\widetilde{a}=a(\pa_xS)$ a.e.
Then, from (\ref{eq:Jdis}), we have the convergence in the sense of 
distribution of $J_\delta$ towards 
$J=-\pa_x(A(\pa_xS)) + \widetilde{a} w$ a.e.
Finally, taking the limit in the distributional sense of equation 
(\ref{eq:rhoprime}) we deduce that $\rho$ is a solution in
the sense of distribution of (\ref{eqrhodis})--(\ref{DefFluxJ}).
By uniqueness of this solution, we deduce that $\rho$ is the unique 
duality solution of Theorem \ref{ExistFlux}.
\end{proof}

Finally, we notice that, as in the continuous case (see \cite{jamesnv}), 
the nonnegativity of the density $\rho$ allows to ensure an OSL condition
on the discretized macroscopic velocity.
\begin{proposition}\label{adisOSL}
With the same notations and assumptions as in Theorem \ref{th:convmacro},
the discrete macroscopic velocity in \eqref{eq:ai12} 
satisfies the discrete OSL condition:
	$$
\frac{1}{\delta x} \big(a_{i+1/2}^n - a_{i-1/2}^n\big) \leq C \alpha.
	$$
\end{proposition}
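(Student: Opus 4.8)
The plan is to reproduce at the discrete level the continuous computation behind the OSL bound, namely $\pa_x\achapo = a'(\pa_xS)(w*\rho-\rho)\le a'(\pa_xS)\,w*\rho$, in which the decisive sign is gained from $a'\ge 0$ together with $\rho\ge 0$. The starting observation is that, away from the degenerate case, the divided difference defining $a_{i+1/2}^n$ in \eqref{eq:ai12} is an average of $a$ along the segment joining $\pa_xS_i^n$ and $\pa_xS_{i+1}^n$: since $A'=a$,
\beq
a_{i+1/2}^n = \int_0^1 a\big((1-s)\pa_xS_i^n + s\,\pa_xS_{i+1}^n\big)\,ds .
\eeq
Subtracting the analogous expression for $a_{i-1/2}^n$, for each fixed $s$ the two arguments differ by $(1-s)(\pa_xS_i^n-\pa_xS_{i-1}^n) + s(\pa_xS_{i+1}^n-\pa_xS_i^n)$, while \eqref{hyp_a} yields the one-sided comparison $a(u)-a(v)\le\alpha\,(u-v)_+$ valid for all $u,v$ (when $u<v$ the left side is $\le 0$ because $a'\ge 0$). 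Hence it only remains to bound the positive part of this second difference of $\pa_xS^n$.

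This is where the nonnegativity of the density enters. I would use the identity established just before \eqref{eq:paxSM},
\beq
\pa_xS_{i+1}^n - \pa_xS_i^n = \delta x\Big(\frac{\nu_{i+1}^n+\nu_i^n}{2}-\frac{\rho_{i+1}^n+\rho_i^n}{2}\Big),
\eeq
and, since Lemma \ref{lem:rhopos} gives $\rho_j^n\ge 0$, discard the density term to bound each increment of $\pa_xS^n$ from above by $\delta x\,(\nu_{i+1}^n+\nu_i^n)/2$. Combining this with the pointwise bound $|\nu_i^n|\le M\|w\|_{L^\infty(\RR)}$ — which follows from \eqref{eq:nu} exactly as in \eqref{eq:boundnu}, since $|\nu_i^n|\le\|w\|_{L^\infty(\RR)}\,\delta x\sum_k\rho_k^n=M\|w\|_{L^\infty(\RR)}$ — gives for every $s\in(0,1)$ the estimate $\big((1-s)(\pa_xS_i^n-\pa_xS_{i-1}^n)+s(\pa_xS_{i+1}^n-\pa_xS_i^n)\big)_+\le \delta x\,M\|w\|_{L^\infty(\RR)}$. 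Integrating in $s$ then yields $a_{i+1/2}^n-a_{i-1/2}^n\le \alpha\,\delta x\,M\|w\|_{L^\infty(\RR)}$, i.e. the claim with $C=M\|w\|_{L^\infty(\RR)}$.

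The step I expect to be the main obstacle is the degenerate case $\pa_xS_{i+1}^n=\pa_xS_i^n$ (and likewise at $i-1/2$): the integral representation above is literally valid only when the two values differ, its natural limiting value being $a(\pa_xS_i^n)$, which matches the Vol'pert value \eqref{aVolpert} rather than the convention $0$ used in \eqref{eq:ai12}. I would therefore carry out the estimate with the divided difference interpreted by continuity as this limit, check consistency with the flux \eqref{eqJa}, and verify that the one-sided bound is preserved; reconciling this with the literal definition \eqref{eq:ai12} is the only delicate bookkeeping. Everything else — the integral representation, the positive-part comparison, and the discrete Poisson identity — is a routine transcription of the continuous argument.
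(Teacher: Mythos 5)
Your proof is correct and follows essentially the same route as the paper's: where the paper applies the mean value theorem twice (writing $a_{i\pm1/2}^n=a(\theta_{i\pm1/2}^n)$ and then $a(\theta_{i+1/2}^n)-a(\theta_{i-1/2}^n)=a'(\gamma_i^n)(\theta_{i+1/2}^n-\theta_{i-1/2}^n)$, bounded via the maximum of the possible increments of $\pa_xS^n$), you use the exact integral representation of the divided difference of $A$ together with $a(u)-a(v)\le \alpha\,(u-v)_+$, but both arguments rest on exactly the same ingredients — $0\le a'\le\alpha$ from \eqref{hyp_a}, the discrete identity from \eqref{paW} and \eqref{eq:Vwdis} expressing increments of $\pa_xS^n$ as $\delta x$ times averages of $\nu-\rho$, the nonnegativity of $\rho_i^n$ from Lemma \ref{lem:rhopos} to discard the density term, and the $L^\infty$ bound on $(\nu_i^n)_{i,n}$ (which you make explicit as $M\|w\|_{L^\infty}$, whereas the paper merely asserts it). The degenerate case you flag is genuinely glossed over in the paper too — its mean value step is equally inapplicable at an interface where $a_{i+1/2}^n=0$ by the convention in \eqref{eq:ai12} while $a(\pa_xS_i^n)\neq 0$ — so your reading of the divided difference by continuity, i.e.\ as the Vol'pert value \eqref{aVolpert}, matches the paper's implicit handling rather than constituting a gap relative to it.
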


\begin{proof}
From definition \eqref{eq:ai12} we have, applying the mean value Theorem:
	$$
\frac{1}{\delta x} \big(a_{i+1/2}^n - a_{i-1/2}^n\big)=
\frac{1}{\delta x} \big(a(\theta_{i+1/2}^n) - a(\theta_{i-1/2}^n)\big)=
\frac{a'(\gamma_i^n)}{\delta x} \big(\theta_{i+1/2}^n - \theta_{i-1/2}^n\big),
	$$
where $\theta_{i+1/2}^n\in (\pa_xS_i^n,\pa_xS_{i+1}^n)$ and 
$\gamma_i^n\in (\theta_{i-1/2}^n,\theta_{i+1/2}^n)$ (where the interval $(\alpha,\beta)$ 
is the interval $(\beta,\alpha)$ when $\beta<\alpha$).
Then, using assumption \eqref{hyp_a},
	$$
\frac{1}{\delta x} \big(a_{i+1/2}^n - a_{i-1/2}^n\big) \leq
\frac{a'(\gamma_i^n)}{\delta x} \max \big\{0,\pa_xS_{i+1}^n-\pa_xS_i^n,
\pa_xS_i^n-\pa_xS_{i-1}^n,\pa_xS_{i+1}^n-\pa_xS_{i-1}^n\big\}.
	$$
From the definition \eqref{paW} and with \eqref{eq:Vwdis}, we have
	$$
\frac{\pa_xS_{i+1}^n-\pa_xS_i^n}{\delta x}
=\frac{\nu_{i+2}^n+\nu_i^n}{2}-\frac{\rho_{i+1}^n+\rho_i^n}{2} \leq
\frac{\nu_{i+2}^n+\nu_i^n}{2},
	$$
and
	$$
\frac{\pa_xS_{i+1}^n-\pa_xS_{i-1}^n}{\delta x}
	=\frac{\nu_{i+2}^n+2\nu_i^n+\nu_{i-1}^n}{2}-\frac{\rho_{i+1}^n+2\rho_i^n+\rho_{i-1}^n}{2}
	\leq \frac{\nu_{i+2}^n+2\nu_i^n+\nu_{i-1}^n}{2},
	$$
where we use the nonnegativity on the sequence $(\rho_i^n)_{i,n}$.
Since the sequence $(\nu_i^n)_{i,n}$ is bounded in $L^\infty$, we deduce that
$\pa_xS_{i+1}^n-\pa_xS_i^n$ and $\pa_xS_{i+1}^n-\pa_xS_{i-1}^n$ are bounded from 
above by a nonnegative constant $C$ only depending on the initial data. 
Using moreover assumption \eqref{hyp_a} allows to conclude the proof.
\end{proof}

\begin{remark}
In applications, we can have $w=W$. In this case, we prefer to set 
$\nu_i^n=S_i^n$ instead of \eqref{eq:nu}. And the sequence $(S_i^n)_{i,n}$ 
is then entirely determined by solving system \eqref{eq:Vwdis}.
Then it is straightforward to adapt the proof of Theorem \ref{th:convmacro};
the convergence result still holds in this case.
\end{remark}

\Section{Asymptotic preserving scheme}\label{APsec}
In this section, we consider an asymptotic preserving scheme 
allowing to recover the numerical discretization 
\eqref{eq:rhoprime}--\eqref{eq:Jdis} from a kinetic model \eqref{eq:kinintro}.

Asymptotic preserving (AP) schemes has been widely developed 
since the 90s for a wide range of time-dependent kinetic and
hyperbolic equations. The basic idea is to develop a numerical
discretization that preserves the asymptotic limits from the 
microscopic to the macroscopic models \cite{jin}. 
Moreover, in the definition of \cite{jin2}, an AP scheme
should be implemented explicitely (or at least
more efficiently than using a Newton type solvers for nonlinear
algebraic systems).

\subsection{Hydrodynamical limit}
As already mentioned, aggregation equation \eqref{eq:aggreg}
can be derived by a hydrodynamical limit of some kinetic equation.
Here we assume that the kinetic system leading to \eqref{eq:aggreg}
in hyperbolic scaling is given by the following relaxation model
of BGK type
\beq\label{eq:kin}
\pa_t f_\eps + v\pa_x f_\eps = \frac 1\eps \big(\rho_\eps E(v,W'*\rho_\eps) 
-f_\eps\big), \qquad t\geq 0,\ x\in \RR,\ v\in V,
\eeq
where we assume that the equilibrium function $E\geq 0$, $E\in C^2(V\times \RR)$
is normalized such that
\beq\label{hypM1}
\int_V E(v,x)\,dv = 1,\quad \forall\, x \in \RR.
\eeq
Moreover we assume that the domain $V$ is a bounded interval of $\RR$, 
for the clarity of the notations, we will set $V=[-V_M,V_M]$.
We denote by $S_\eps$ the potential $S_\eps=W*\rho_\eps$, 
which, due to \eqref{hyp} is a weak solution to
\beq\label{eq:potS}
-\pa_{xx}S_\eps + w*\rho_\eps = \rho_\eps.
\eeq
We define moreover the antiderivative
	$$
\calE(v,x) = \int^x_0 E(v,y)\,dy,
	$$
so that
	$$
\pa_{xx} S_\eps\, E(v,\pa_xS_\eps) = \pa_x \calE(v,\pa_xS_\eps).
	$$
Then we deduce formally from \eqref{eq:potS} that
	\beq\label{Pieps}
\ds \Pi_\eps:= \rho_\eps E(v,\pa_xS_\eps) = -\pa_x \calE(v,\pa_xS_\eps) + (w*\rho_\eps) E(v,\pa_xS_\eps).
	\eeq

Then the momentum equations resulting from this system are given by
\begin{eqnarray}
\label{eqcin:moment1}
&&\pa_t\rho_\eps + \pa_x J_\eps = 0, \\[1mm]
\label{eqcin:moment2}
&&\pa_t J_\eps+ \pa_x q_\eps = \frac 1\eps \Big(\int_V v\Pi_\eps(v)\,dv -J_\eps\Big),
\end{eqnarray}
where $\rho_\eps = \int_V f_\eps(x,v)\,dv$, $J_\eps=\int_V vf_\eps(x,v)\,dv$
and $q_\eps=\int_V v^2 f_\eps(x,v)\,dv$.
We define 
\beq\label{defaA}
a(x) = \int_V v E(v,x)\,dv \ , \quad \mbox{ and } \quad
A(x) = \int_V v \calE(v,x)\,dv.
\eeq
Obviously, we have $A'=a$. Then, with this notation, we have
	$$
\int_V v\Pi_\eps(v)\,dv = a(\pa_xS_\eps) \rho_\eps = -\pa_x A(\pa_xS_\eps) + (w*\rho_\eps) a(\pa_xS_\eps).
	$$
Letting formally $\eps\to 0$ in \eqref{eqcin:moment2}, we get
	$$
J_\eps \to J_0:=\int_V v\Pi_0(v)\,dv = - \pa_x A(\pa_xS_0) + (w*\rho_0) a(\pa_xS_0).
	$$
Injecting in \eqref{eqcin:moment1}, we recover the aggregation equation
	\beq\label{eqmacro}
\pa_t \rho_0 + \pa_x J_0=0, \qquad J_0 =-\pa_x A(\pa_xS_0) + (w*\rho_0) a(\pa_xS_0),
	\eeq
where $S_0=W*\rho_0$.

We can now establish the rigorous 
derivation of the macroscopic model. This is an extension of the hydrodynamical limit stated in 
Theorem 3.10 of \cite{jamesnv}, 
where a particular case appearing in bacterial chemotaxis is considered.
	\begin{theorem}\label{ConvKinet}
Assume that $V\subset\RR$ is bounded and that assumption \eqref{hyp} holds.
Let $f^{ini}\geq 0$ be given such that 
$\rho^{ini}:= \int_V f^{ini}(v)\,dv$ belongs to $\calP_1(\RR)$.
Let $f_\eps$ be a solution to \eqref{eq:kin}--\eqref{eq:potS}
with $0\leq E \in C^2(V\times\RR)$ satisfying \eqref{hypM1}.
Then, as $\eps \to 0$, $f_\eps$ converges in the following
sense:
	$$
\rho_\eps:=\int_V f_\eps(v)\,dv \rightharpoonup \rho \qquad \mbox{ in } \quad 
\smes := C([0,T];{\cal M}_{b}(\RR)-\sigma({\cal M}_{b},C_0)), 
	$$
where $\rho$ is the unique duality solution of Theorem \ref{ExistFlux}.
	\end{theorem}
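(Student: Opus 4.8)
The plan is to follow the classical strategy for BGK-type relaxation limits: derive uniform-in-$\eps$ a priori bounds, extract compactness, and pass to the limit in the moment equations \eqref{eqcin:moment1}--\eqref{eqcin:moment2}, the delicate point being the nonlinear flux. First I would collect the a priori estimates. Since the relaxation operator in \eqref{eq:kin} preserves nonnegativity and $\int_V E\,dv=1$ forces $\int_V(\rho_\eps E-f_\eps)\,dv=0$, one gets $f_\eps\ge0$ together with conservation of mass $\|\rho_\eps(t)\|_{\calM_b}=M$ for all $t$. Because $V=[-V_M,V_M]$ is bounded, all moments are dominated by the mass: $|J_\eps|\le V_M\,\rho_\eps$ and $0\le q_\eps\le V_M^2\,\rho_\eps$, so $\rho_\eps$, $J_\eps$ and $q_\eps$ are bounded in $L^\infty(0,T;\calM_b(\RR))$ uniformly in $\eps$. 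Finite speed of propagation ($|v|\le V_M$) then controls the growth of the support and hence the first moment, keeping $\rho_\eps(t)$ in a fixed bounded (thus tight) subset of $\calP_1(\RR)$.

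The key estimate concerns the potential. Writing $\pa_xS_\eps=W'*\rho_\eps$ and using $W''=-\delta_0+w$ from \eqref{hyp}, we obtain $\pa_{xx}S_\eps=-\rho_\eps+w*\rho_\eps$, whence
\[\|\pa_{xx}S_\eps(t)\|_{\calM_b}\le\|\rho_\eps(t)\|_{\calM_b}+\|w*\rho_\eps(t)\|_{\calM_b}\le M(1+w_0).\]
Together with the $L^\infty$ bound obtained by integrating $\pa_{xx}S_\eps$, this shows that $(\pa_xS_\eps)_\eps$ is bounded in $L^\infty\cap BV$ in space, uniformly in $\eps$ and in $t$. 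This is exactly the continuous counterpart of the BV estimate underlying Theorem \ref{th:convmacro}, and it is where the pointy structure of $W$ is decisive.

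For compactness I would use the continuity equation \eqref{eqcin:moment1}: since $J_\eps$ is bounded in $L^\infty(0,T;\calM_b)$, $\pa_t\rho_\eps$ is bounded in a fixed negative Sobolev norm, which yields time-equicontinuity of $t\mapsto\rho_\eps(t)$ for the weak-$*$ topology. An Arzel\`a--Ascoli argument in $\smes$ then gives, up to a subsequence, $\rho_\eps\rightharpoonup\rho$ in $\smes$, while the uniform spatial $BV$ bound upgrades this to strong convergence $\pa_xS_\eps\to\pa_xS$ in $L^1_{loc}((0,T)\times\RR)$ and a.e.; passing to the limit in \eqref{eq:potS} identifies $-\pa_{xx}S+w*\rho=\rho$. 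It then remains to treat \eqref{eqcin:moment2}: tested against a smooth function the left-hand side carries a factor $\eps$, and the bounds on $J_\eps,q_\eps$ force $\eps(\pa_tJ_\eps+\pa_xq_\eps)\to0$ in $\calD'$, so that $J_\eps$ and $\int_V v\,\Pi_\eps\,dv$ share the same limit.

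The main obstacle is identifying this common limit, since after blow-up the velocity $a(\pa_xS)$ is discontinuous and the naive product $a(\pa_xS_\eps)\rho_\eps$ pairs a weak-$*$ converging measure with an $L^\infty$ coefficient, which does not survive concentration. This is resolved exactly as in the numerical analysis, through the Vol'pert form \eqref{Pieps}: the a.e. convergence of $\pa_xS_\eps$ and the continuity of $a,A$ give $A(\pa_xS_\eps)\to A(\pa_xS)$ and $a(\pa_xS_\eps)\to a(\pa_xS)$ in $L^p_{loc}$, while tightness of $\rho_\eps$ and $w\in C_b$ give $w*\rho_\eps\to w*\rho$ uniformly on compacts; hence
\[\int_V v\,\Pi_\eps\,dv=-\pa_xA(\pa_xS_\eps)+(w*\rho_\eps)\,a(\pa_xS_\eps)\longrightarrow-\pa_xA(\pa_xS)+(w*\rho)\,a(\pa_xS)=J_0\]
in $\calD'$. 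Thus $\rho$ solves \eqref{eqrhodis}--\eqref{DefFluxJ} in the distributional sense. By the uniqueness part of Theorem \ref{ExistFlux}, $\rho$ is the unique duality solution; since the limit is independent of the extracted subsequence, the whole family $\rho_\eps$ converges in $\smes$, which concludes the proof.
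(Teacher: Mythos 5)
Your proposal is correct and follows essentially the same route as the paper's proof: mass conservation and boundedness of $V$ give compactness of $\rho_\eps$ in $\smes$, the $O(\eps)$ term in the second moment equation identifies $J_\eps$ with $\int_V v\,\Pi_\eps\,dv$, and the Vol'pert-form flux $-\pa_xA(\pa_xS_\eps)+(w*\rho_\eps)a(\pa_xS_\eps)$ together with a.e.\ convergence of $\pa_xS_\eps$ circumvents the ill-defined product before concluding by uniqueness from Theorem \ref{ExistFlux}. The only cosmetic difference is that you obtain the a.e.\ convergence of $\pa_xS_\eps$ via a uniform spatial $BV$ bound, whereas the paper deduces it directly from the weak convergence of $\rho_\eps$ and the uniform $L^\infty$ bound $|\pa_xS_\eps|\le(1+w_0)M$ (citing Lemma 4.2 of \cite{jamesnv}); both are standard and lead to the same conclusion.
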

\begin{proof}
From \eqref{eqcin:moment1}, we deduce that for all $t\in [0,T]$, 
$|\rho_\eps(t,\cdot)|(\RR)=|\rho^{ini}|(\RR)$. 
Therefore, for all $t\in [0,T]$ 
the sequence $(\rho_\eps(t,\cdot))_\eps$ is relatively
compact in $\calM_b(\RR)-\sigma(\calM_b(\RR),C_0(\RR))$.
Moreover, since the domain $V$ is bounded, we deduce that
$(q_\eps)_\eps$ and $(J_\eps)_\eps$ are bounded in $L^\infty([0,T],L^1(\RR))$
independantly of $\eps$.
Using \eqref{eqcin:moment1}, we deduce the equicontinuity in time
of the sequence $(\rho_\eps)_\eps$. Thus this latter sequence is
relatively compact in $\smes$; up to a subsequence it converges 
towards $\rho$ in $\smes$.

From \eqref{eqcin:moment2}, we have
	\beq\label{conv1}
J_\eps = \int_V v\Pi_\eps(v)\,dv + \eps\big(\pa_tJ_\eps+\pa_x q_\eps\big).
	\eeq
Using assumption \eqref{hyp}, we have 
	$$
\pa_xS_\eps = W'*\rho_\eps = \int_{\RR} H(x-y)\rho_\eps(dy) 
+ \int_\RR \int_{-\infty}^{x-y} w(z)dz \rho_\eps(dy),
	$$
where $H$ is the Heaviside function. Then, using the $L^1$ bound
on $w$ in \eqref{hyp}, we deduce
	$$
|\pa_xS_\eps| \leq (1+w_0)|\rho_\eps|(\RR)= (1+w_0) M.
	$$
Therefore $(\pa_xS_\eps)_\eps$ is bounded in $L^\infty([0,T]\times \RR)$
independantly of $\eps$.

Moreover, by the weak convergence of $(\rho_\eps)_\eps$, we deduce that
$(\pa_xS_\eps)_\eps$ converges weakly towards $\pa_xS=W'*\rho$ 
in $L^\infty\,w-*$ and a.e. (see e.g. Lemma 4.2 of \cite{jamesnv}).
Equivalently we have the strong convergence of $w*\rho_\eps$ towards
$w*\rho$. We deduce that in the distributional sense
\beq\label{conv2}
\int_V v\Pi_\eps(v)\,dv \to J:= -\pa_xA(\pa_xS)+(w*\rho) a(\pa_xS).
\eeq
Combining \eqref{conv1} and \eqref{conv2}, we deduce by taking the
limit in the distributional sense in equation \eqref{eqcin:moment1} that
$$
\pa_t\rho + \pa_x J = 0.
$$
Obviously we have in the distributional sense $-\pa_{xx}S=\rho + w*\rho$.
Finally, thanks to the chain rule for BV function (or Vol'pert calculus), 
there exists $\widehat{a}_V$ such that $\widehat{a}_V=a(\pa_xS)$ a.e. and 
$J=\widehat{a}_V \rho$. By uniqueness of the solution of Theorem \ref{ExistFlux}
we deduce that the solution obtained in the limit $\eps\to 0$ is 
the unique duality solution of this latter Theorem.
\end{proof}

\subsection{An AP numerical scheme}
As above, we consider a time discretization of step $\delta t$, a uniform 
space discretization of step $\delta x$ and a uniform discretization 
of the velocity space of size $\delta v$.
We denote $f_\eps^n$ an approximation of $f_\eps$ at time $t_n=n\delta t$.
The asymptotic preserving scheme we are considering here is based on 
the following time splitting argument:

\noindent$\bullet$ Assuming that approximations $f_\eps^n$, $\rho_\eps^n$ and $S_\eps^n$ 
  of $f_\eps$, $\rho_\eps$ and $S_\eps$ are known at time $t_n$.
  We have now everything at hand to compute $\Pi_\eps^{n}$ from \eqref{Pieps}:
  \beq\label{Piespn}
  \Pi_\eps^{n}(v) := -\pa_x \calE(v,\pa_xS_\eps^{n})+
  (w*\rho_\eps^{n}) E(v,\pa_xS_\eps^{n}).
  \eeq
  We solve in this first step, during a time step $\delta t$,
  the relaxation equation
  \beq\label{eq:step2}
  \pa_t f_\eps = \frac 1\eps (\Pi_\eps - f_\eps),
  \eeq
which allows to compute $f_\eps^{n+1/2}$.
The main point to build an asymptotic preserving scheme 
is that we should recover the good asymptotic when $\eps \to 0$.
We notice by integrating \eqref{eq:step2} on $V$ that $\pa_t \rho_\eps = 0$. 
Then $\rho_\eps^{n+1/2}=\rho_\eps^{n}$ and since \eqref{eq:potS}
depends only on $\rho_\eps$, we have $S_\eps^{n+1/2}=S_\eps^{n}$.
Therefore $\Pi_\eps$ is constant during this time step:
$\Pi_\eps^{n+1/2}=\Pi_\eps^{n}$.
Then we can solve exactly equation \eqref{eq:step2} during this time step by
	\beq\label{eq:f1}
f_\eps^{n+1/2} = e^{-\delta t/\eps} (f_\eps^{n} - \Pi_\eps^{n}) + \Pi_\eps^{n}.
	\eeq

\noindent$\bullet$ In a second step, we discretize during a time step $\delta t$
  the free transport equation:
	$$
\pa_t f_\eps + v \pa_x f_\eps = 0.
	$$
Denoting by $D_x$ some discrete derivative with respect to $x$, we obtain
	\beq\label{eq:f1/2}
f_\eps^{n+1} = f_\eps^{n+1/2} - \delta t v D_x f_\eps^{n+1/2}.
	\eeq
Then we compute $\rho_\eps^{n+1} = \int_V f_\eps^{n+1}(v)\,dv$ and solve 
\eqref{eq:potS} to obtain $S_\eps^{n+1}$. 

This approach allows to construct an order one in time discretization. 
We can construct a second order in time scheme by using the Strang splitting
which consists in solving the first step during a time step $\delta t/2$,
then solving the second step during a time step $\delta t$, finally 
solving again the first step during a time step $\delta t/2$.

With this method, the small parameter $\eps$ is taken into account only 
in the first step.
Letting $\eps\to 0$, we deduce easily from \eqref{eq:f1} that at the limit
$\eps\to 0$, we have
$$
f_\eps^{n+1/2} \to f_0^{n+1/2}= \Pi_0^{n+1/2} = \Pi_0^{n},
$$
since as explained above $\rho_\eps^{n+1/2}=\rho_\eps$ and $S_\eps^{n+1/2}=S_\eps^n$.
Moreover, with \eqref{Piespn} we obtain
$$
\Pi_0^{n}(v) := -\pa_x \calE(v,\pa_xS_0^{n})+
(w*\rho_0^{n}) E(v,\pa_xS_0^{n}).
$$
Then by applying the first step \eqref{eq:f1/2}, we have
$$
f_0^{n+1} = \Pi_0^{n+1/2} + \delta t v D_x \Pi_0^{n+1/2}.
$$
Integrating with respect to $v$, we deduce, using the notations
$\rho_0 = \int_V f_0\,dv$ and $J_0= \int_V v f_0\,dv$ that
$$
\rho_0^{n+1} = \rho_0^{n+1/2} + \delta t D_x J_0^{n+1/2}.
$$
Moreover, we have $\rho_0^{n+1/2}=\rho_0^{n}$ and $J_0^{n+1/2}=J_0^{n}$.
Then,
$$
\rho_0^{n+1} = \rho_0^{n} + \delta t D_x J_0^{n},
$$
which is an explicit in time discretization of the conservation equation
\eqref{eqmacro}.

{\bf Discretization}.
We consider that the velocity space is given by $V=[-V_M,V_M]$ 
and is uniformly discretized by 
$v_j=-V_M+j \delta v$, for $j=0,\ldots,N_v$, and $\delta v = 2V_M /N_v$.
We recall the time and space discretization $t_n=n\delta t$ and
$x_i=x_0+i\delta x$.
As above, $(\rho_i^n)_{i,n}$, $(S_i^n)_{i,n}$ and $(\nu_i^n)_{i,n}$ are 
approximations of resp. $(\rho(t_n,x_i))_{i,n}$, $(S(t_n,x_i))_{i,n}$ and
$(W*\rho(t_n,x_i))$. Moreover, we denote by $(f_{ij}^n)_{i,j,n}$
an approximation of $(f(t_n,x_i,v_j))_{i,j,n}$ and
by $(\Pi_{ij}^n)_{i,j,n}$ an approximation of $(\Pi(t_n,x_i,v_j))_{i,j,n}$
defined in \eqref{Pieps}.

Assuming $(f_{ij}^n)_{i,j,n}$ is known for some $n\in \NN$, we compute 
the approximated density by a trapezoidal rule
	\beq\label{rhoif}
\rho_i^n = \frac{\delta v}{2}(f_{i0}^n+f_{iN_v}^n) + \sum_{j=1}^{N_v-1} \delta v f_{ij}^n.
	\eeq
Then we solve \eqref{eq:Vwdis} where $\nu_i^n$ is obtained
thanks to \eqref{eq:nu}, which allows to compute $S_i^n$.
We introduce moreover the approximation of the function $A$ 
defined in \eqref{defaA} computed by the trapezoidal rule:
	$$
A_\delta(x) := \frac{\delta v}{2}(v_0 \calE(v_0,x) + v_{N_v} \calE(v_{N_v},x))
+ \delta v \sum_{j=1}^{N_v-1} v_j \calE(v_j,x).
	$$
From now on, we denote by $I_\delta$ the linear operator of approximation 
by the trapezoidal rule:
	$$
I_\delta(\calF) = \frac{\delta v}{2}( \calF(v_0) +  \calF(v_{N_v}))
+ \delta v \sum_{j=1}^{N_v-1} \calF(v_j).
	$$
We recall that if $\calF$ is smooth, typically $\calF\in C^2(V)$, then 
we have the error estimate 
	\beq\label{errortrap}
\Big|I_\delta(\calF) - \int_V \calF(v)\,dv \Big| \leq \frac{\delta v^2 V_M\|\calF''\|_\infty}{6}.
	\eeq
Then we have $\rho_i^n=I_\delta((f_{ij}^n)_j)$ and 
$A_\delta(x)=I_\delta((v_j\calE(v_j,x))_j)$.

We have seen in the previous Section that the flux and therefore
the corresponding velocity \eqref{eq:ai12} should be defined with care.
In this aim, we first introduce the following discretization of 
$E(v_j,\pa_xS(t_{n},x_i))$:
	\begin{equation}\label{eq:ui12}
E_{ij}^{n} = \left\{\begin{array}{ll} 0, &  \mbox{ if }\ \pa_xS_{i+1/2}^{n}= \pa_xS_{i-1/2}^{n},  \\[2mm]
\displaystyle \frac{\calE(v_j,\pa_xS_{i+1/2}^{n})-
\calE(v_j,\pa_xS_{i-1/2}^{n})}{\pa_xS_{i+1/2}^{n}-\pa_xS_{i-1/2}^{n}},
\quad & \mbox{ otherwise.} 
	\end{array}\right.
	\end{equation}
In this expression, we use the notation $\pa_xS_{i+1/2}^n=(S_{i+1}^n-S_i^n)/\delta x$.
However, with this discretization of $E$ does not satisfy the normalization condition \eqref{hypM1}.
Therefore we define
	\beq\label{defeij}
e_{ij}^{n} = \left\{\begin{array}{ll}
\ds \frac{E_{ij}^{n}}{I_\delta((E_{ij})_j^{n})}, 
&\qquad \mbox{if } I_\delta((E_{ij})_j^{n}) \neq 0;    \\[5mm]
\qquad \frac{1}{N_v \delta v}, & \qquad \mbox{otherwise,}
	\end{array}\right.
	\eeq
so that $I_\delta((e_{ij})_j^{n})=1$.
Then the quantity $\Pi$ defined in \eqref{Pieps} is approximated by
	\beq\label{PiErho}
\Pi_{ij}^{n} = e_{ij}^{n} \rho_i^{n},
	\eeq
and we have $I_\delta((\Pi_{ij}^{n})_j)=\rho_i^{n}$.
We notice that by using \eqref{eq:Vwdis}, we can rewrite 
	\beq\label{Pidev}
\Pi_{ij}^{n} = -\frac{1}{I_\delta((E_{ij}^{n})_j)}\,
\frac{\calE(v_j,\pa_xS_{i+1/2}^{n})-
\calE(v_j,\pa_xS_{i-1/2}^{n})}{\delta x} + \nu_i^{n} e_{ij}^{n},
	\eeq
which corresponds to a discretization of \eqref{Piespn}.
Finally, we obtain the required approximation of the velocity by multiplying \eqref{eq:ui12} by $v_j$ and summing over $j$:
for each $i$ such that $\pa_xS_{i+1/2}^{n}\neq \pa_xS_{i-1/2}^{n}$ and $I_\delta((E_{ij}^n)_j) \neq 0$, we have
	\beq\label{adelta}
I_\delta((v_j e_{ij}^{n})_j) = \frac{1}{I_\delta((E_{ij})_j^{n})}\,
\frac{A_\delta(\pa_xS_{i+1/2}^{n})-A_\delta(\pa_xS_{i-1/2}^{n})}{\pa_xS_{i+1/2}^{n}-\pa_xS_{i-1/2}^{n}}
:= \widehat{a_\delta}_i^{n}.
	\eeq
For the others integers $i$, we set $\widehat{a_\delta}_i^{n}=0$.

{\bf Numerical scheme.}
Let us assume that $(f_{ij}^{n})_{i,j}$ is known for some $n\in \NN$. 
Then, we compute $\rho_i^{n}=I_\delta((f_{ij}^{n})_j)$ and 
the corresponding macroscopic potential $(S_i^{n})_i$ by solving 
\beq\label{eq:Vwdis2}
-\frac{S_{i+1}^n-2 S_i^n + S_{i-1}^n}{\delta x^2} + \nu_i^n = \rho_i^n,
\eeq
where $(\nu_i^{n})_i$ is obtained as in the previous Section by \eqref{eq:nu}.
We compute $(\Pi_{ij}^{n})_{ij}$ with \eqref{PiErho}--\eqref{defeij}.
Then we have:
\beq\label{fijstep2}
f_{ij}^{n+1/2} = e^{-\delta t/\eps} f_{ij}^{n} + (1-e^{-\delta t/\eps}) 
\Pi_{ij}^{n}.
\eeq
Thanks to our choice of $e_{ij}^n$ in \eqref{defeij}, we have by applying
the operator $I_\delta$ in \eqref{fijstep2} that
$\rho_i^{n+1/2} = \rho_i^{n}$.
Then, we obtain $(f_{ij}^{n+1})_{i,j}$ by, for instance, applying 
a Lax-Friedrichs discretization for the step 2:
\beq\label{fijstep1}
f_{ij}^{n+1} = f_{ij}^{n+1/2} - \frac{\lambda}{2} (v_j f_{i+1,j}^{n+1/2}
-v_j f_{i-1,j}^{n+1/2}) + \frac{\lambda V_M}{2} 
(f_{i+1,j}^{n+1/2}-2f_{ij}^{n+1/2}+f_{i-1,j}^{n+1/2}).
\eeq

\begin{theorem}\label{prop:AP}
Let $V=[-V_M,V_M]$, with $\lambda V_M \leq 1$, and $0\leq E \in C^2(V\times \RR)$
satisfy \eqref{hypM1}. Consider the sequence $(f_{ij}^n)_{i,j,n}$ computed thanks 
to \eqref{eq:Vwdis2}--\eqref{fijstep1}.
Then, as $\eps \to 0$ and $\delta v\to 0$, the sequence
$(\rho_i^n)_{i,n}:= I_\delta ((f_{ij}^n)_j)_{i,n}$ converges weakly, 
up to a subsequence, towards the sequence $(\widetilde{\rho}_i^n)_{i,n}$,
computed by a Lax-Friedrichs discretization as in \eqref{eq:rhoprime}--\eqref{eq:Jdis}
of the equation
	$$
\pa_t \widetilde{\rho} + \pa_x \widetilde{J} = 0, \qquad
\widetilde{J} = (-\pa_x A(\pa_x\widetilde{S})+a(\pa_x\widetilde{S})\widetilde{S}).
	$$
\end{theorem}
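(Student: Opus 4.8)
The plan is to reduce the vector scheme \eqref{fijstep2}--\eqref{fijstep1} to a scalar evolution for the density $\rho_i^n=I_\delta((f_{ij}^n)_j)$, and then to pass to the two limits $\eps\to0$ and $\delta v\to0$ in turn, the spatial mesh $\delta x,\delta t$ being kept fixed throughout. First I would apply the quadrature $I_\delta$ to both half-steps. Because of the normalization $I_\delta((e_{ij}^n)_j)=1$ built into \eqref{defeij}, one has $I_\delta((\Pi_{ij}^n)_j)=\rho_i^n$, so the relaxation step \eqref{fijstep2} conserves the density, $\rho_i^{n+1/2}=\rho_i^n$, and hence $S_i^{n+1/2}=S_i^n$. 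Integrating the transport step \eqref{fijstep1} against $I_\delta$ then yields the exact relation
\[
\rho_i^{n+1}=\rho_i^n-\frac{\lambda}{2}\big(J_{i+1}^{n+1/2}-J_{i-1}^{n+1/2}\big)
+\frac{\lambda V_M}{2}\big(\rho_{i+1}^n-2\rho_i^n+\rho_{i-1}^n\big),
\]
where, setting $J_i^n:=I_\delta((v_jf_{ij}^n)_j)$ and using \eqref{PiErho} together with \eqref{adelta} to evaluate $I_\delta((v_j\Pi_{ij}^n)_j)=\widehat{a_\delta}_i^n\rho_i^n$, the numerical flux splits as $J_i^{n+1/2}=e^{-\delta t/\eps}J_i^n+(1-e^{-\delta t/\eps})\widehat{a_\delta}_i^n\rho_i^n$.

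Second, I would establish a priori bounds uniform in $\eps$ and $\delta v$. Nonnegativity of $(f_{ij}^n)$ propagates in time: the relaxation step \eqref{fijstep2} is a convex combination of $f_{ij}^n\ge0$ and $\Pi_{ij}^n=e_{ij}^n\rho_i^n\ge0$, while the Lax--Friedrichs transport \eqref{fijstep1} has nonnegative coefficients $\tfrac{\lambda}{2}(V_M\mp v_j)\ge0$ and $1-\lambda V_M\ge0$ under the hypothesis $\lambda V_M\le1$. Mass conservation $\delta x\sum_i\rho_i^n=M$ follows, and on a fixed mesh gives $0\le\rho_i^n\le M/\delta x$; boundedness of $V$ then gives $|J_i^n|\le V_M\rho_i^n$. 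As in Lemma \ref{lem:rhopos}, solving \eqref{eq:Vwdis2} with \eqref{eq:nu} produces the bounds $|\pa_xS_{i\pm1/2}^n|\le M(1+w_0)$, and the mean value theorem applied to \eqref{adelta} gives $|\widehat{a_\delta}_i^n|\le c$ with $c$ from \eqref{defc}.

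Third, for fixed $\delta v$ I would let $\eps\to0$ by induction on $n$. Since $e^{-\delta t/\eps}\to0$ and $J_i^n$ is bounded on the fixed mesh, the memory term $e^{-\delta t/\eps}J_i^n$ vanishes and $J_i^{n+1/2}\to\widehat{a_\delta}_i^n\rho_i^n$; the induction hypothesis carries the convergence of $(\rho_i^n)$, and hence of $(S_i^n)$ and $(\widehat{a_\delta}_i^n)$, through the recursion. The limit obeys a Lax--Friedrichs scheme with cell-centered flux $\widehat{a_\delta}_i^n\rho_i^n$ and viscosity $V_M$. Finally I would send $\delta v\to0$: the trapezoidal error estimate \eqref{errortrap} gives $A_\delta\to A$, and the normalization \eqref{hypM1} yields $\int_V\calE(v,x)\,dv=x$, whence $I_\delta((E_{ij}^n)_j)\to1$, so $e_{ij}^n\to E_{ij}^n$ and $\widehat{a_\delta}_i^n$ converges to the universal representative velocity of \eqref{eq:ai12}. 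Using the uniform bounds of the second step to extract a weak-$*$ convergent subsequence, I would identify its limit with the macroscopic Lax--Friedrichs scheme \eqref{eq:rhoprime}--\eqref{eq:Jdis}.

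The main obstacle I expect lies in the degenerate branches of the definitions \eqref{defeij} and \eqref{adelta}, where either $\pa_xS_{i+1/2}^n=\pa_xS_{i-1/2}^n$ or $I_\delta((E_{ij}^n)_j)=0$: one must check that these exceptional cases are consistent with the nondegenerate formulas in the limit and do not spoil the convergence of $\widehat{a_\delta}_i^n$ towards \eqref{eq:ai12}. The other delicate point is purely bookkeeping: controlling $e^{-\delta t/\eps}J_i^n$ and ordering the double limit so that $\eps\to0$ then $\delta v\to0$ indeed reproduces \eqref{eq:rhoprime}--\eqref{eq:Jdis}, the weak-$*$ compactness being what forces the ``up to a subsequence'' in the statement.
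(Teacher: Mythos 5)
Your proposal is correct and follows essentially the same route as the paper: apply $I_\delta$ to reduce the splitting scheme to a Lax--Friedrichs update for $\rho_i^n$ (using $I_\delta((e_{ij}^n)_j)=1$ so the relaxation step conserves density and $I_\delta((v_j\Pi_{ij}^n)_j)=\widehat{a_\delta}_i^n\rho_i^n$), establish nonnegativity under $\lambda V_M\le 1$, mass conservation and the bound $|\pa_xS_{i+1/2}^n|\le M(1+w_0)$, pass $\eps\to 0$ first at fixed $\delta v$, then remove the $O(\delta v^2)$ trapezoidal error (including $I_\delta((E_{ij}^n)_j)=1+O(\delta v^2)$) to recover the velocity \eqref{eq:ai12}. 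The only cosmetic difference is bookkeeping: where you induct on $n$ over the fixed mesh with the explicit flux relaxation $J_i^{n+1/2}=e^{-\delta t/\eps}J_i^n+(1-e^{-\delta t/\eps})\widehat{a_\delta}_i^n\rho_i^n$, the paper introduces the bounded ratio $\gamma_{i+1/2}^{n+1/2}$, derives a convex-combination recursion for the cumulative masses $M_i^n$, and extracts limits by $BV$/weak-$*$ compactness --- and the degenerate branches of \eqref{eq:ui12}, \eqref{defeij}, \eqref{adelta} that you rightly flag as the delicate point are handled (indeed, somewhat glossed) there in exactly the same subsequence sense.
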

We notice that the limit $\delta v\to 0$ is mandatory to recover the 
similar scheme as in the previous Section. This is due to the approximation
error of the trapezoidal rule.

\begin{proof} 
We actually prove that for $\delta v$ fixed, the
limit $\eps\to 0$ of the kinetic scheme furnishes a discretized 
version of the scalar conservation law as in the previous Section 
\eqref{eq:rhoprime}--\eqref{eq:Jdis} but with a macroscopic velocity
which differs from \eqref{eq:ai12} up to a $O(\delta v^2)$ term.
The Theorem is then proved by letting $\delta v$ going to $0$.
The proof is divided into several steps.

{\it (i) Nonnegativity.} Assume $f_{ij}^{n}\ge 0$ for 
all $i,j$. Since the function $E$ is nonnegative, we deduce that 
$x\mapsto \calE(v,x)$ is non-decreasing for all $v$ therefore
with \eqref{eq:ui12}, we deduce that $E_{ij}^{n} \geq 0$.
As a direct consequence of \eqref{PiErho}, we have $\Pi_{ij}^{n} \geq 0$.
Then from \eqref{fijstep2}, we conclude that $f_{ij}^{n+1/2}$ is nonnegative.
Next, from \eqref{fijstep1}, we deduce that provided 
$\lambda V_M \leq 1$, $f_{ij}^{n+1}$ is a convex
combination of $f_{ij}^{n+1/2}$, $f_{i-1,j}^{n+1/2}$ and $f_{i+1,j}^{n+1/2}$.
Then $f_{ij}^{n+1} \geq 0$.

{\it (ii) Mass conservation.} 
We recall that thanks to our choice of $e_{ij}^n$ in \eqref{defeij}, 
we have from \eqref{fijstep2} that $\rho_i^{n+1/2} = \rho_i^{n}$.
Summing \eqref{fijstep1} over $i=0,\ldots,N_x$ and $j=0,\ldots,N_v$, 
we deduce (with boundary conditions $f_{0j}=f_{N_xj}=0$) that
	$$
\delta x \sum_{i=0}^{N_x} \rho_i^{n+1} = \delta x \sum_{i=0}^{N_x} \rho_i^{n+1/2}.
	$$
Then the scheme is conservative:
	$$
\delta x \sum_{i=0}^{N_x} \rho_i^{n+1} = \delta x \sum_{i=0}^{N_x} \rho_i^{n} =M.
	$$

{\it (iii) Estimates.}
Since $\rho_i^{n+1/2}=\rho_i^{n}$, we have $S_i^{n+1/2}=S_i^{n}$ and $\nu_i^{n+1/2}=\nu_i^{n}$.
Due to the mass conservation, we still have the bound in \eqref{eq:boundnu}, i.e. for all $i$ and $n$,
\beq\label{eq:boundnu2}
\delta x \sum_{j=1}^i |\nu_i^n| \leq M w_0.
\eeq

Applying the operator $I_\delta$ on equation \eqref{fijstep1}, we deduce, 
\beq\label{eq:rho1}
\rho_i^{n+1}=\rho_i^{n+1/2} + \frac{\lambda V_M}{2} (\rho_{i+1}^{n+1/2}
-2\rho_i^{n+1/2}+\rho_{i-1}^{n+1/2})
- \frac{\lambda}{2} (I_\delta ((v_jf_{i+1,j}^{n+1/2})_j)-I_\delta ((v_jf_{i-1,j}^{n+1/2})_j)).
\eeq
We introduce the quantity 
	$$
\gamma_{i+1/2}^{n+1/2} = \begin{cases}\dfrac{I_\delta ((v_j(f_{i+1,j}^{n+1/2}+f_{i,j}^{n+1/2}))_j) }{\rho_{i+1}^{n+1/2}+\rho_i^{n+1/2}},
	& \mbox{if } \rho_{i+1}^{n+1/2}+\rho_i^{n+1/2} \neq 0 \\
		0 & \mbox{otherwise} \end{cases}
	$$
We clearly have that $0\leq \gamma_{i+1/2}^{n+1/2} \leq V_M$ and by linearity
$$
I_\delta ((v_j(f_{i+1,j}^{n+1/2}+f_{i,j}^{n+1/2}))_j) 
= \gamma_{i+1/2}^{n+1/2}(\rho_{i+1}^{n+1/2}+\rho_i^{n+1/2})
= \gamma_{i+1/2}^{n+1/2}(\rho_{i+1}^{n}+\rho_i^{n}).
$$
Finally, the equation satisfied by the sequence $(\rho_i^n)_{i,n}$ in \eqref{eq:rho1}
rewrites
\beq\label{eq:rhostep1}
\rho_i^{n+1}=\rho_i^{n} + \frac{\lambda V_M}{2} (\rho_{i+1}^{n}-2\rho_i^{n}+
\rho_{i-1}^{n}) - \frac{\lambda}{2} \big(\gamma_{i+1/2}^{n+1/2} (\rho_{i+1}^{n}+\rho_i^{n})
- \gamma_{i-1/2}^{n+1/2} (\rho_i^{n}+\rho_{i-1}^{n})\big).
\eeq
As in the proof of Theorem \ref{th:convmacro}, we introduce the quantity
$M_i^n=\delta x \sum_{k=0}^i \rho_k^n$.
By definition of $M_i^n$, we have $\delta x (\rho_{i+1}^n+\rho_i^n)=M_{i+1}^n-M_{i-1}^n$.
Therefore, summing \eqref{eq:rhostep1} with vanishing boundary conditions, we deduce
\beq\label{eq:Mcin}
M_i^{n+1}=M_i^{n}(1-\lambda V_M) + \frac{\lambda}{2} M_{i+1}^{n} (V_M-\gamma_{i+1/2}^{n+1/2}) 
+ \frac{\lambda}{2} M_{i-1}^{n}(V_M+\gamma_{i+1/2}^{n+1/2}).
\eeq
Thus $M_i^{n+1}$ is a convex combination of $M_{i-1}^{n}$, $M_i^{n}$ and 
$M_{i+1}^{n}$.
It is then obvious by an induction on $n$ to deduce that for all $i$ and $n$, 
$0\leq M_i^n\leq M$ and that we have a $BV$-estimate on the sequence $(M_i^n)$.

From \eqref{eq:Vwdis2}, we have
	\beq\label{eq:dxSnu}
-\frac{\pa_xS_{i+1/2}^n-\pa_xS_{i-1/2}^n}{\delta x} + \nu_i^n = \rho_i^n.
	\eeq
Summing this latter equation on $i$, we deduce, using $S_1^n=S_0^n$,
	\beq\label{eq:dxSM}
\pa_xS_{i+1/2}^n = \sum_{k=1}^i \delta x \nu_k^n - M_i^n.
	\eeq
Thus, we have the bound for all $i$, $n$
	\beq\label{bounddxS}
|\pa_xS_{i+1/2}^n|\leq M(1+w_0).
	\eeq
We deduce then that there exists a nonnegative constant $C$ independant 
on $\eps$ such that
	$$
0\leq E_{ij}^n \leq C.
	$$
From \eqref{hypM1} and assuming that $E\in C^2(\RR^2)$, we deduce 
from error estimates for the trapezoidal rule \eqref{errortrap} that
	$$
I_\delta((E_{ij}^{n})_j)= 1+O(\delta v^2),
	$$
where the constant in $O(\delta v^2)$ depends actually on
$\|\pa_xS\|_\infty$. However, from \eqref{bounddxS}, this bound depends 
only on $M$, $w_0$ and $V_M$.

{\it (iv) Passing to the limit $\eps\to 0$.}
We will denote with a tilde $\widetilde{\ }$ all the limits of considered quantities
when they exist.

From the $L^\infty\cap BV$ bound independant of $\eps$ on the sequence $(M_i^n)_{i,n}$,
we deduce that we can extract a subsequence that 
converges strongly in $L^1_{loc}$ as $\eps\to 0$ to $(\widetilde{M}_i^n)_{i,n}$.
Moreover, since the sequence $(\gamma_{i+1/2}^{n+1/2})_{i,n}$ is bounded in $L^\infty$
independantly of $\eps$, we can extract a subsequence converging in 
$L^\infty-weak *$ as $\eps\to 0$ to $(\widetilde{\gamma}_{i+1/2}^{n+1/2})_{i,n}$.
Taking the limit $\eps\to 0$ in \eqref{eq:Mcin}, we deduce 
that the limit sequences $(\widetilde{M}_i^n)_{i,n}$ and
$(\widetilde{\gamma}_{i+1/2}^{n+1/2})_{i,n}$ satisfy the same relation \eqref{eq:Mcin}.
Defining $\widetilde{\rho}_i^n=(\widetilde{M}_i^n-\widetilde{M}_{i-1}^n)/\delta x$, 
this sequence satisfies equation \eqref{eq:rhostep1}. Moreover, we have 
the weak convergence in $\smes$ of $(\rho_i^n)_{i,n}$ towards 
$(\widetilde{\rho}_i^n)$ as $\eps\to 0$.

Using \eqref{eq:nu}, we deduce, by using $\rho_k^n=(M_k^n-M_{k-1}^n)/\delta x$,
	$$
\nu_i^n = \sum_{k=1}^{N_x} M_k^n
\int_{(i-1-k)\delta x}^{(i-k)\delta x} \frac{w(z+\delta x)-w(z)}{\delta x}\,dz.
	$$
Since the function $w$ is bounded, we deduce that the integral in the
right hand side is bounded.
Therefore, from the bound on $(M_i^n)_{i,n}$, we deduce a $BV$-bound on the sequence 
$(\nu_i^n)_{i,n}$, whose we can extract a subsequence that converges strongly in 
$L^1_{loc}$ as $\eps\to 0$.
We deduce using moreover \eqref{eq:dxSM} that, up to a 
subsequence, $(\pa_xS_{i+1/2}^n)_{i,n}$ converges strongly in $L^1_{loc}$ as $\eps\to 0$.
Obviously, the limit satisfies relation \eqref{eq:dxSnu}.
Then, we can pass to the limit in \eqref{eq:ui12} and in \eqref{defeij} to 
deduce the $L^\infty-weak*$ convergence of $(E_{ij}^{n})_{i,j,n}$ and of
$(e_{ij}^{n})_{i,j,n}$.
By the same token, we have the $L^\infty-weak*$ convergence of 
$(\Pi_{ij}^{n})_{i,j,n}$ and $(\widehat{a_\delta}_i^{n})_{i,n}$ towards limits
still satisfying \eqref{Pidev} and \eqref{adelta} with a tilde on all
quantities.
Then we can rewrite the limiting expression
	$$
\widetilde{\Pi}_{ij}^{n} = \widetilde{e}_{ij}^{n} \widetilde{\rho}_i^{n}.
	$$

From \eqref{fijstep2}, we have
	$$
I_\delta((v_jf_{ij}^{n+1/2})_j) = e^{-\delta t/\eps} I_\delta((v_jf_{ij}^{n})_j)
+ (1-e^{-\delta t/\eps}) I_\delta((v_j\Pi_{ij}^{n})_j).
	$$
Since $I_\delta((v_jf_{ij}^{n})_j)\leq V_M M$, we can take the weak limit 
as $\eps\to 0$ and deduce that
	$$
I_\delta((v_jf_{ij}^{n+1/2})_j) \rightharpoonup I_\delta((v_j\widetilde{\Pi}_{ij}^{n})_j)
= \widetilde{a_\delta}_{ij}^{n}  \widetilde{\rho}_i^{n}.
	$$
Finally, when $\eps\to 0$, we have using \eqref{eq:rho1} that
	$$
\widetilde{\rho}_i^{n+1} = \widetilde{\rho}_i^{n} + 
\frac{\lambda V_M}{2} ( \widetilde{\rho}_{i+1}^{n} -2 \widetilde{\rho}_i^{n}
+ \widetilde{\rho}_{i-1}^{n}) - \frac{\lambda}{2}
(\widetilde{a_\delta}_{i+1}^{n} \widetilde{\rho}_{i+1}^{n}
- \widetilde{a_\delta}_{i-1}^{n} \widetilde{\rho}_{i-1}^{n}).
	$$
Moreover, due to the $L^\infty$-bound on $(\pa_xS_i^n)_{i,n}$ \eqref{bounddxS} and
the error estimates on the trapezoidal rule \eqref{errortrap}, we deduce that
	$$
\widetilde{a_\delta}_i^{n} = \frac{A(\widetilde{\pa_xS}_{i+1/2}^{n})-A(\widetilde{\pa_xS}_{i-1/2}^{n})}
{\widetilde{\pa_xS}_{i+1/2}^{n}-\widetilde{\pa_xS}_{i-1/2}^{n}} \big( 1+ O(\delta v^2) \big).
	$$
Thus when $\delta v\to 0$, the limit $\widetilde{\rho}_i^n$
satisfies a Lax-Friedrichs discretization of the problem 
$\pa_t\rho + \pa_x(\widehat{a}\rho)=0$, where $\widehat{a}$ is discretized by \eqref{eq:ai12}.
\end{proof}

\Section{Numerical simulations}\label{simul}
We present in this Section some numerical examples to illustrate our 
results. In particular, we present two examples with applications
in biology or plasma physics, where $w=0$ or $w=W$.

\subsection{Simulation of an aggregation equation}
In this subsection we consider the case $W=-\frac 12 |x|$ and $a=\mbox{id}$. 
Then the equation writes
	$$
\pa_t\rho + \pa_x((W'*\rho) \rho) = 0.
	$$
This equation appears in several applications in biology or physics, see for
instance \cite{NPS} where this system is the high field limit of a
Vlasov-Poisson-Fokker-Planck system, the quantity $S=W*\rho$ being the solution
of the Poisson equation.
In biology, it can be seen as a Patlack-Keller-Segel model without diffusion, the 
quantity $S$ being the chemoattractant concentration.

Numerical scheme \eqref{eq:rhoprime}--\eqref{eq:Jdis} is implemented.
We notice that in the case $a(x)=x$, \eqref{eq:ai12} rewrites
$a_{i+1/2}^n = \frac 12 \big( \pa_xS_{i+1}^n +\pa_xS_{i}^n \big)$.
Numerical results are display in Figure \ref{figvpfp0} for 
two different initial data. In Figure \ref{figvpfp0} left, we take
$\rho^{ini}(x)=e^{-10x^2}$. We observe that the initial bump stiffens and 
the solutions blows up in finite time to form one stationary single Dirac.
In Figure \ref{figvpfp0} right we take 
$\rho^{ini}(x)=e^{-10(x-1.25)^2}+0.8 e^{-20x^2}+e^{-10(x+1)^2}$.
As for the previous initial data, the initial bumps blow up and
collapse in one single Dirac mass in finite time.

\begin{figure}[htp]
\begin{center}
  \includegraphics[width=2.5in]{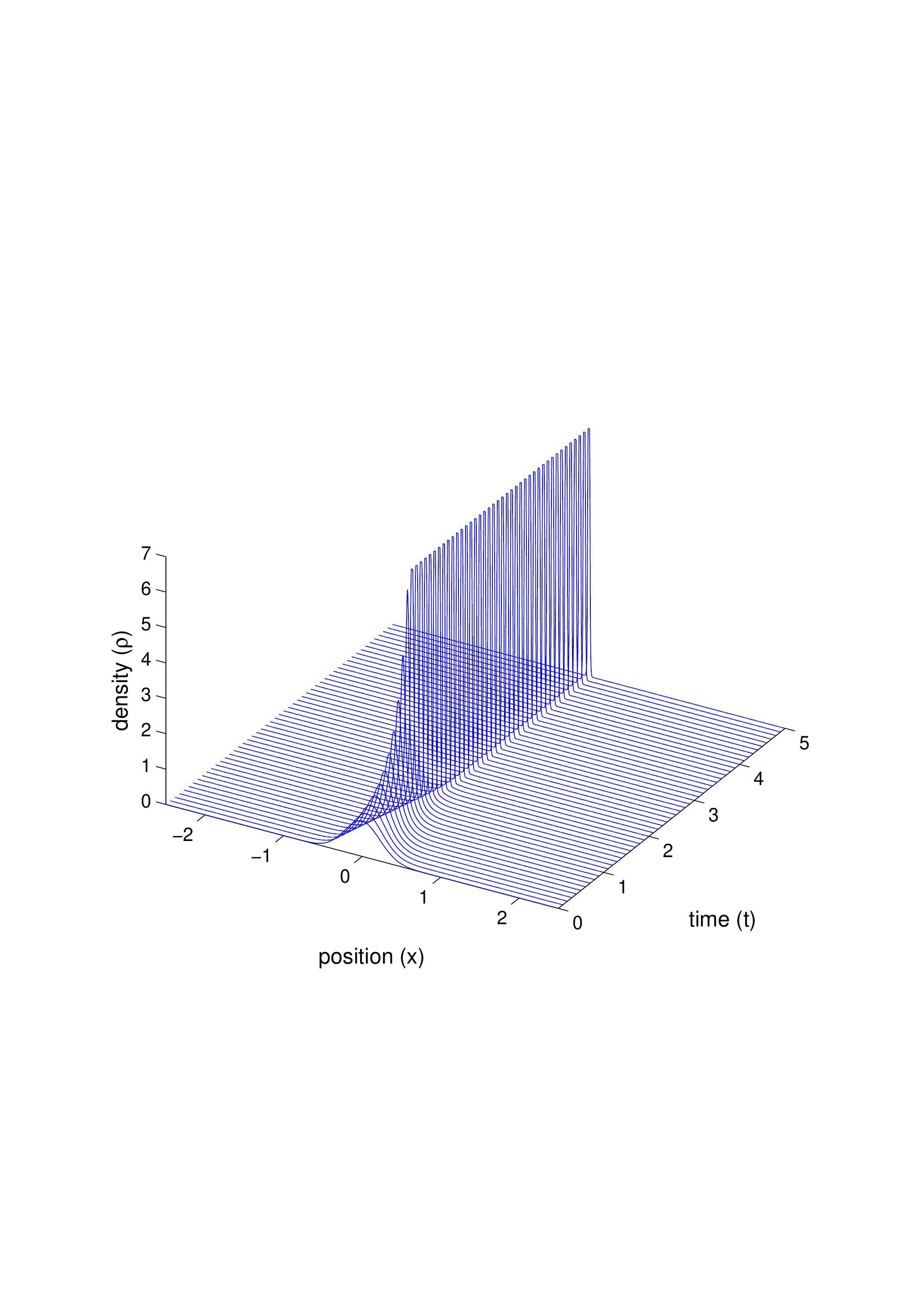}
  \includegraphics[width=2.5in]{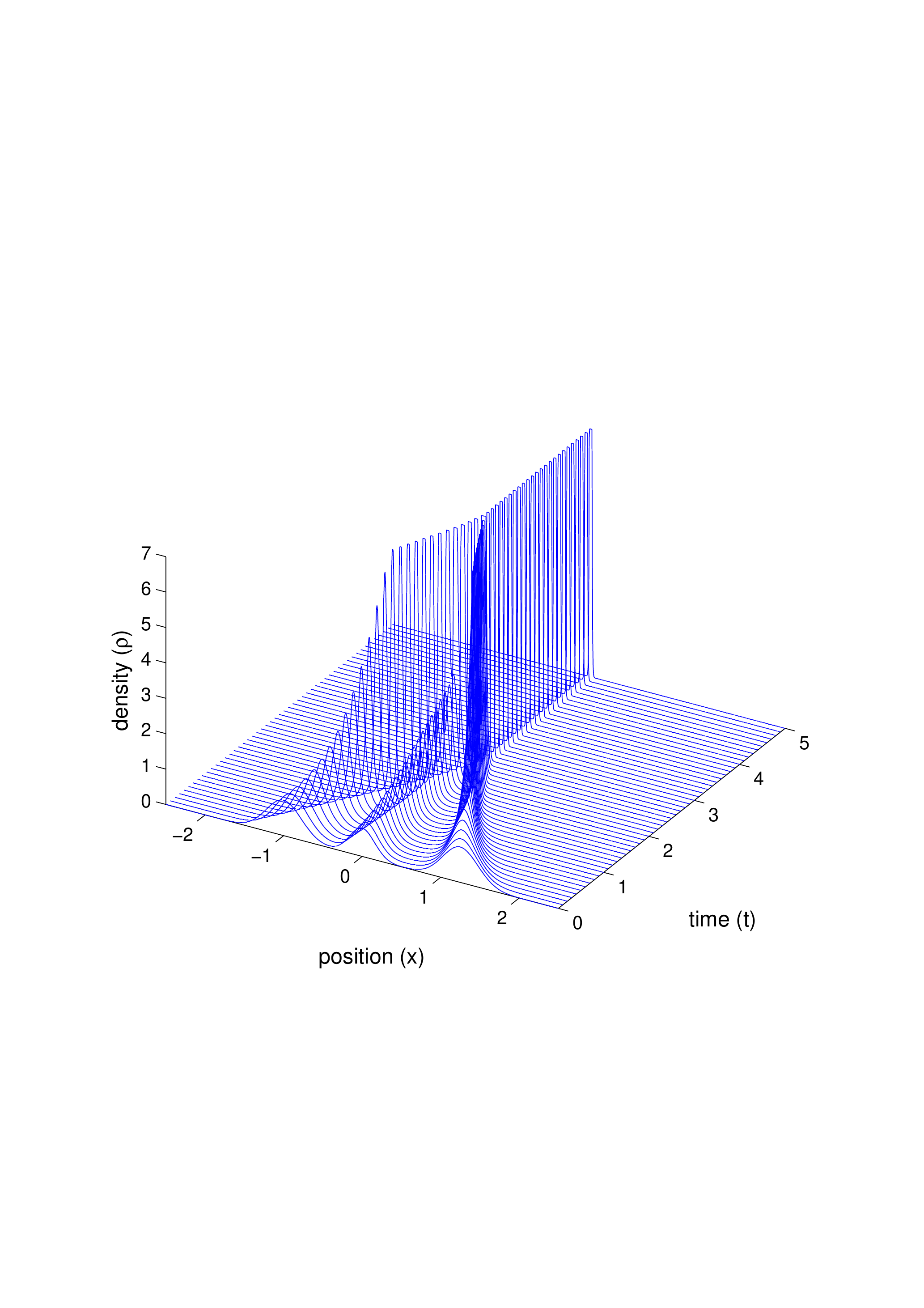} \\
  \caption{Dynamics of the density $\rho$ with two different intial data
  in the case $W=-\frac 12 |x|$}\label{figvpfp0}
\end{center}
\end{figure}

In this case $W''=-\delta_0$. Then, setting $S=W*\rho$, we have 
that $-\pa_{xx}S=\rho$ and $-\pa_xS$ is an antiderivative of $\rho$.
Then, integrating the aggregation equation, we can rewrite it as
$$
\pa_t \pa_xS + \frac 12 \pa_x \big( \pa_x S\big)^2 = 0.
$$
We recognize the Burgers equation for $\pa_xS$.
Moreover, in this particular case where $\nu_i^n=0$, we can 
deduce from \eqref{eq:rhoprime}--\eqref{eq:Vwdis} a scheme on
$(\pa_xS_i^n)_{i,n}$. First, \eqref{eq:Jdis} rewrites
$$
J_{i+1/2}^n = - \frac{1}{\delta x} \big(A(\pa_xS_{i+1}^n)
-A(\pa_xS_i^n)\big).
$$
Moreover, denoting 
$\rho_{i+1/2}^n= \frac 12 (\rho_i^n+\rho_{i+1}^n)$, we deduce from 
\eqref{eq:Vwdis} and \eqref{paW}
$$
\rho_{i+1/2}^n = -\frac {1}{\delta x} \big(\pa_xS_{i+1}^n-\pa_xS_i^n\big).
$$
We deduce
$$
\sum_{k=0}^i \rho_{k+1/2}^n = - \frac {1}{\delta x} 
\big(\pa_xS_{i+1}^n - \pa_xS_0^n\big).
$$
Equation \eqref{eq:rhoprime} implies straightforwardly
$$
\rho_{i+1/2}^{n+1} = \rho_{i+1/2}^n(1-\lambda c)+ \frac{\lambda}{2}c
(\rho_{i-1/2}^n+\rho_{i+3/2}^n) + \frac{\lambda}{4} (J_{i-1/2}^n-J_{i+3/2}^n).
$$
Summing this latter equation and using the boundary conditions 
$\pa_xS_0^n=0$, we deduce
$$
\pa_xS_i^{n+1} = \pa_xS_{i}^n(1-\lambda c)+ \frac{\lambda}{2}c
(\pa_xS_{i-1}^n+\pa_xS_{i+1}^n) - 
\frac{\lambda}{4} \big(A(\pa_xS_{i+1}^n)-A(\pa_xS_{i-1}^n)\big).
$$
In the case at hand where $a=id$, we have $A(x)=x^2/2$, 
we recognize the well-known Lax-Friedrichs 
discretization for the Burgers equation. 
Here we have $c=M$ where $M$ is the total mass of the system.
Then the numerical results of Figure \ref{figvpfp0} was expected;
we recover the convergence in finite time in a single Dirac mass 
as established for instance in \cite[Section 4]{Carrillo}.


\subsection{A kinetic model for chemotaxis}\label{modelchemo}
Let us consider the so-called Othmer-Dunbar-Alt model, 
describing the motion of cells by chemotaxis, in one dimension. 
This model has been used since the 80's when it has been observed
that the motion of bacteria is due to the alternance of straigth swim
in a given direction, called {\it run phase}, with cells reorientation
to choose a new direction, called {\it tumble phase}.
This system governs the dynamics of the distribution function $f_\eps$.
In the hyperbolic scaling, it writes:
	$$
\pa_t f_\eps + v\pa_xf_\eps = \frac 1\eps \int_V \big(T[v'\to v] f_\eps(v') 
-T[v\to v'] f_\eps(v)\big) \,dv'.
	$$
In this equation $T[v'\to v]$ is the turning rate, corresponding to the
probability of cells to change their velocities from $v'$ to $v$
during a tumble phase. In this work, we consider the model proposed 
by Dolak and Schmeiser \cite{dolschmeis} where
	$$
T[v'\to v]= \phi(v'\pa_xS_\eps).
	$$
The function $\phi$ is given and assumed to be in $C^1(\RR)$.
In this equation, the quantity $S_\eps$ corresponds to the chemoattractant 
concentration which solves the elliptic equation
	\begin{equation}\label{eq:S}
-\pa_{xx}S_\eps+S_\eps=\rho_\eps,
	\end{equation}
where $\rho_\eps=\int_V f_\eps(v)\,dv$ is the density of cells. This latter equation 
can be rewritten $S_\eps=W*\rho_\eps$ for $W=\frac 12 e^{-|x|}$;
therefore we have $W=w$ in \eqref{hyp}.

The velocity $v$ is assumed to have a constant modulus, therefore
the set of velocities in one dimension is given by $V=\{-v,v\}$.
Then the kinetic equation rewrites in one dimension:
	\begin{equation}\label{chemokinetic}
\pa_t f_\eps + v \pa_x f_\eps = \frac 1\eps (\phi(-v\pa_xS_\eps)f_\eps(-v) 
-\phi(v\pa_xS_\eps)f_\eps(v)).
	\end{equation}
Then the density of cells is defined by $\rho_\eps:=f_\eps(v)+f_\eps(-v)$.
Up to a rescaling, we assume that the turning rate satisfies
	\begin{equation}\label{assumpphi}
\phi(x)= \frac 12 + \phi_1(x), \mbox{ with } \phi_1(-x)=-\phi_1(x)
	\end{equation}
so that $\phi(x)+\phi(-x)=1$, for all $x\in \RR$. 
Then, we can rewrite \eqref{chemokinetic} as
	$$
\pa_t f_\eps + v \pa_x f_\eps = \frac 1\eps ( \phi(-v\pa_xS_\eps)\rho_\eps
- f_\eps(v)).
	$$
We recognize the form \eqref{eq:kin} for the kinetic equation with $E(v,x)=\phi(-v x)$. 
Then the limiting model when $\eps\to 0$ is given by (see Theorem \ref{ConvKinet})
	\beq\label{chemomacro}
\pa_t \rho + \pa_x J = 0, \qquad 
J = -\pa_x A(\pa_xS) + a(\pa_xS) S, \qquad a(x) = -v\phi_1(vx).
	\eeq
This equation have been studied in \cite{jamesnv}.

\begin{remark}
As in the first example of this Section, we can recover an equation for the potential $S=W*\rho$, which turns out here
to be nonlocal. Indeed, taking the convolution with $W=\frac 12 e^{-|x|}$ of \eqref{chemomacro}, we obtain
	$$
\pa_t S + A(\pa_xS) - W*A(\pa_xS) + \pa_x W*(a(\pa_xS)S) = 0.
	$$
Then by recombining \eqref{eq:S} and \eqref{chemomacro}, this latter
equation can rewrite
	$$
\pa_t S - \pa_{txx}S + \pa_x\big[-\pa_xA(\pa_xS)+a(\pa_xS)S\big] = 0.
	$$
It bears some resemblance with the well-known Camassa-Holm equation \cite{camassa}, and exhibits the same 
peakon-like solutions. However the underlying dynamics is completely different and in the present case peakons  
collapse. Notice also that there are no anti-peakons because of the positivity of $\rho$.
\end{remark}

\subsubsection{Attractive case}
The computational domain is assumed to be $[-2.5,2.5]$ and the velocity
$v$ is normalized to 1. 
We consider the function $a(x)=-\phi_1(x)=2/\pi \mbox{ Arctan}(10 x)$, which 
clearly satisfies (\ref{hyp_a}).

We first consider the macroscopic model \eqref{chemomacro} and discretize
the system thanks to \eqref{eq:rhoprime}--\eqref{eq:Vwdis}, with $\nu_i^n$
replaced by $S_i^n$ in \eqref{eq:Vwdis}. Figure \ref{fig_macro} displays
the numerical results for the following initial data:
$\rho^{ini}(x)=e^{-10(x-0.7)^2}+e^{-10 (x+0.7)^2}$ (left) and 
$\rho^{ini}(x)=e^{-10(x-1.25)^2}+0.8 e^{-20 x^2}+e^{-10(x+1)^2}$ (right).
As expected, we have a fast blow up of regular solution and a finite-time
collapse in a single Dirac mass.

\begin{figure}[htp]
\begin{center}
  \includegraphics[width=2.5in]{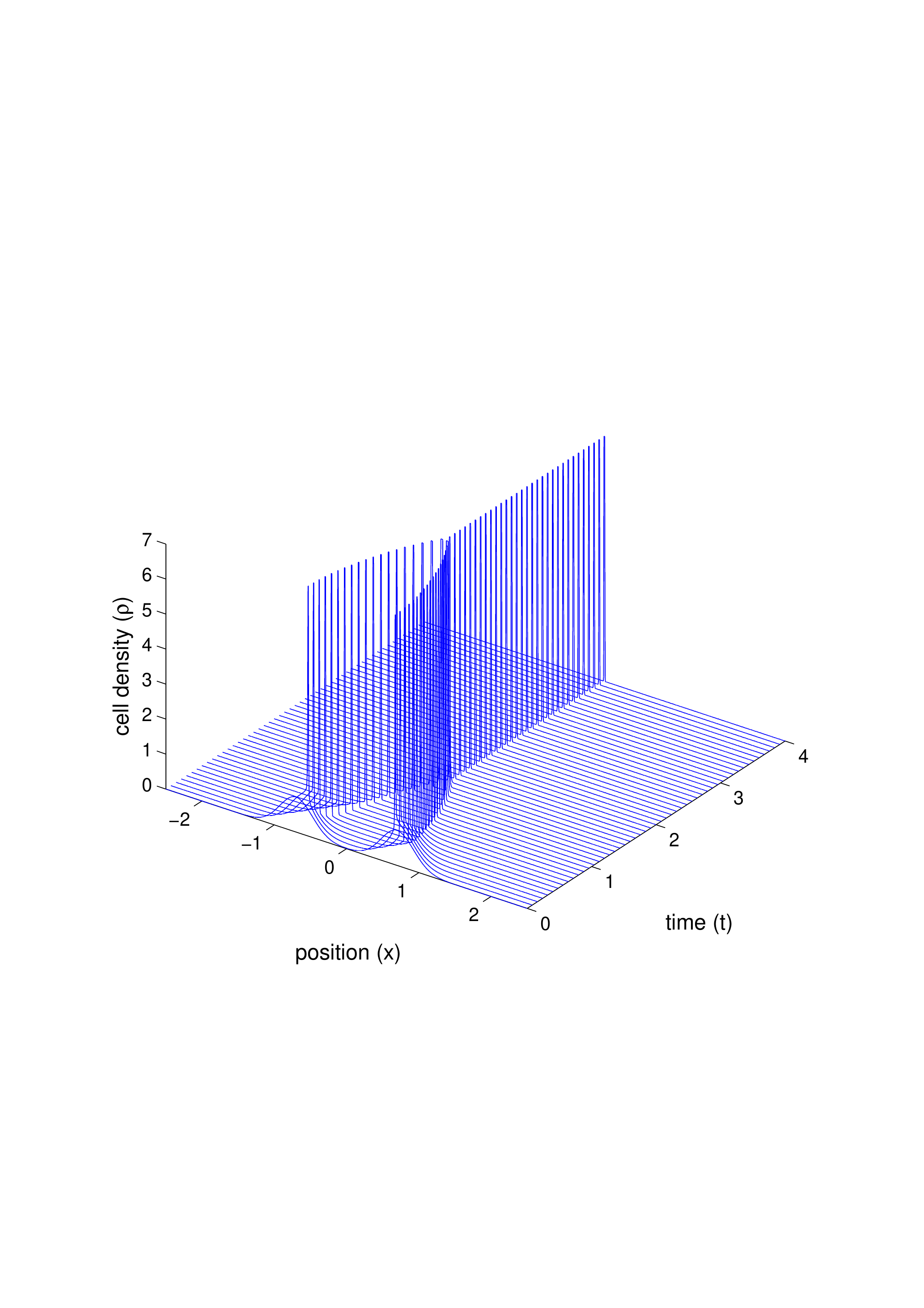}
  \includegraphics[width=2.5in]{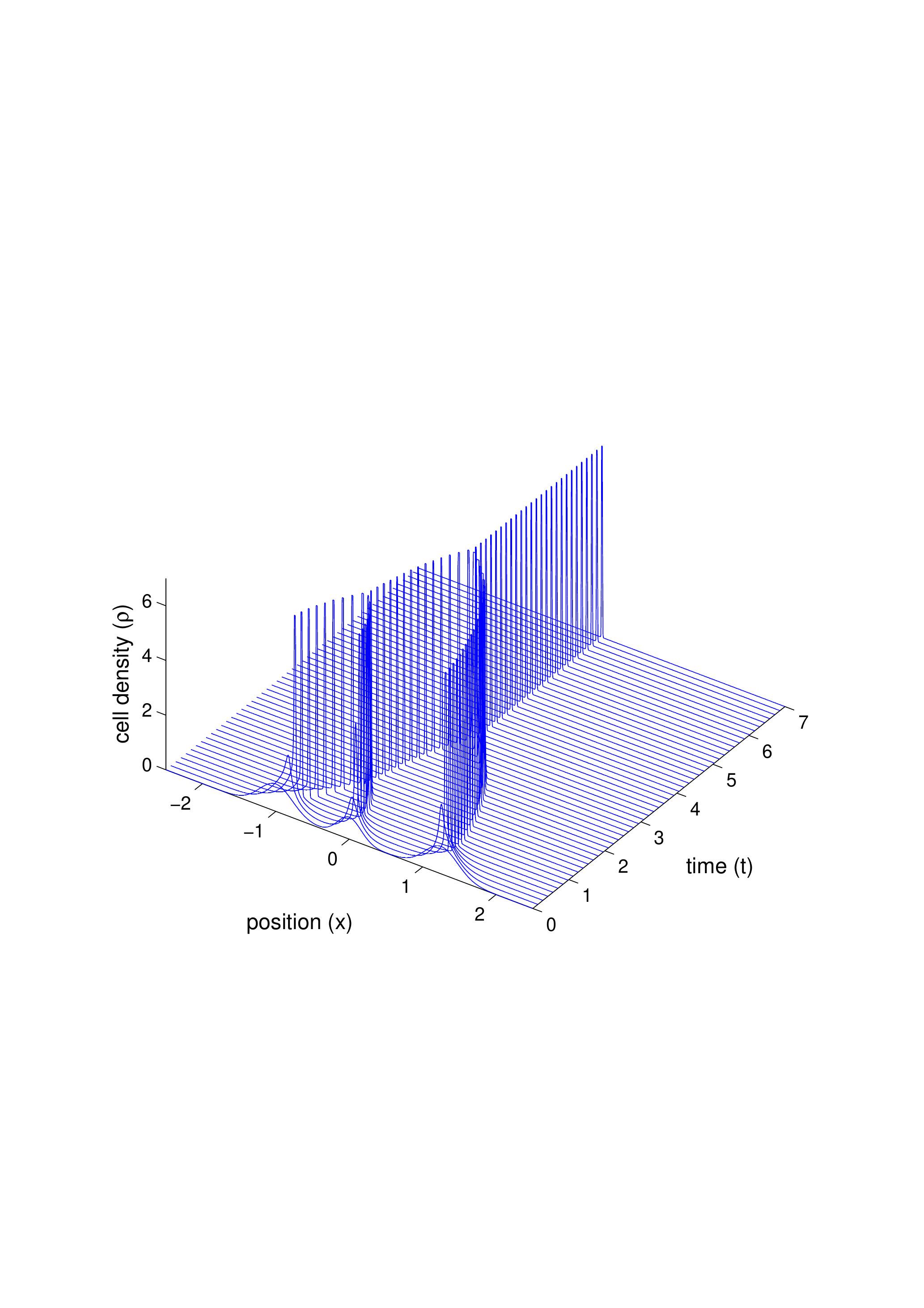}\\
  \caption{Macroscopic model \eqref{chemomacro}: cell density $\rho$ for two 
    regular bumps (left) and three regular bumps (right) initial data.}\label{fig_macro}
  \end{center}
\end{figure}

The behaviour of such Dirac solutions can be recovered by studying 
solutions in the form $\rho(t,x)=\sum_{i=1}^n m_i \delta(x-x_i(t))$. Then we have
$S(t,x)=W*\rho(t,x) = \frac 12 \sum_{i=1}^n m_i e^{-|x-x_i(t)|}$. After
straightforward computations, we deduce from the expression in
\eqref{chemomacro} that 
	$$
J=-\sum_{i=1}^n [A(\pa_xS)]_{x_i} \delta(x-x_i(t)),
	$$
where the notation $[f]_{x_i}$ denotes the jump of the function $f$ at
the point $x_i$. In particular, we have that $\rho$ satisfies 
system \eqref{chemomacro} provided,
	$$
m_i x'_i(t)=- [A(\pa_xS)]_{x_i}.
	$$
Moreover, the function 
$a$ being increasing and odd, the function $A$ is strictly convex and 
can be chosen even. Then, equilibrium states satisfy
	$$
-[A(\pa_xS)]_{x_i} = A\Big(\frac 12 \big(m_i+\sum_{j\neq i} m_j e^{-|x_j-x_i|}
\big) \Big) - A\Big(\frac 12 \big(-m_i+\sum_{j\neq i} m_j e^{-|x_j-x_i|}
\big) \Big)=0.
	$$
This equality is true only if $\sum_{j\neq i} m_j e^{-|x_j-x_i|}=0$, which implies $n=1$. 
Therefore stationary states are given by a single stationary Dirac mass.
Convergence towards this equilibrium is proved in \cite{jamesnv,Carrillo}.

Then we consider the kinetic framework and implement the scheme described in 
Section \ref{APsec}.
In Figure \ref{fig2} and \ref{fig3} we display the dynamics of the cell density
$\rho$ for the regular initial data with two bumps:
$\rho^{ini}(x)=e^{-10(x-0.7)^2}+e^{-10 (x+0.7)^2}$
and an initial distribution function given by $f^{ini}(x,v)=\frac 12 \rho^{ini}(x)$.
We plot in the left part of the figures the numerical results corresponding
to the macroscopic model \eqref{chemomacro}, whereas the right part corresponds
to numerical solution of \eqref{chemokinetic}.
For the macroscopic case (left), we notice that the blow up occurs fastly. 
After a small time, solution is composed of 2 peaks 
which can be considered as numerical Dirac masses.
Then the Dirac masses move and collapse in finite time.
In the kinetic case (right), the solution does not blow up, as it is expected.
However, the behaviour is similar to the one for the macroscopic model:
we observe the formation of two aggregates that are in interaction and attract
themselves in a single aggregate in finite time. 

\begin{figure}[htp]
\begin{center}
  \includegraphics[width=2.5in]{2bumptemps.pdf}
  \includegraphics[width=2.5in]{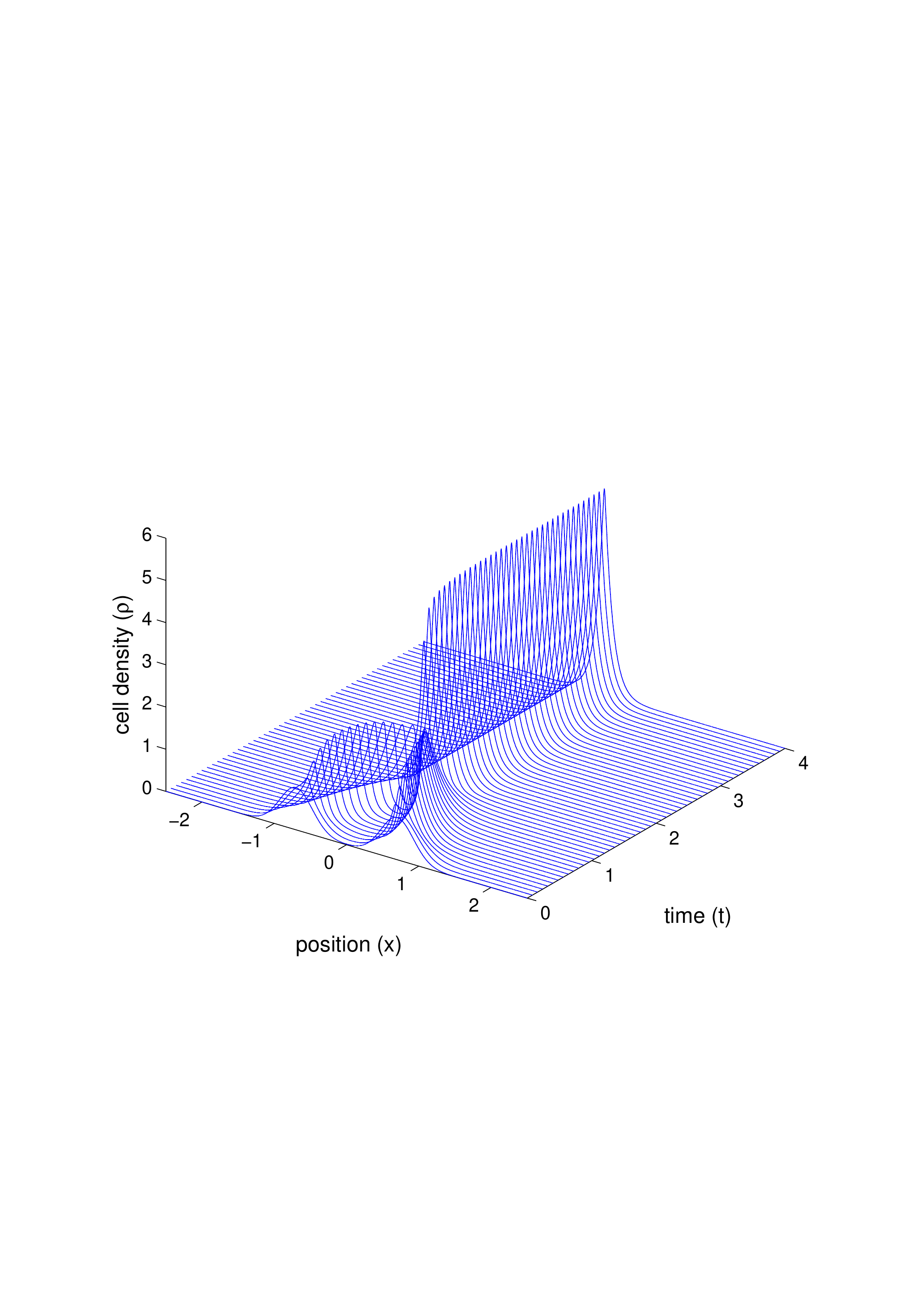}\\
  \caption{Dynamics of the cell density $\rho$ for an initial data given by a two 
    regular bumps: comparison between the macroscopic model (left) and
    the kinetic model for $\eps=0.1$ (right).}\label{fig2}
  \end{center}
\end{figure}

\begin{figure}[htp]
\begin{center}
  \includegraphics[width=2.5in]{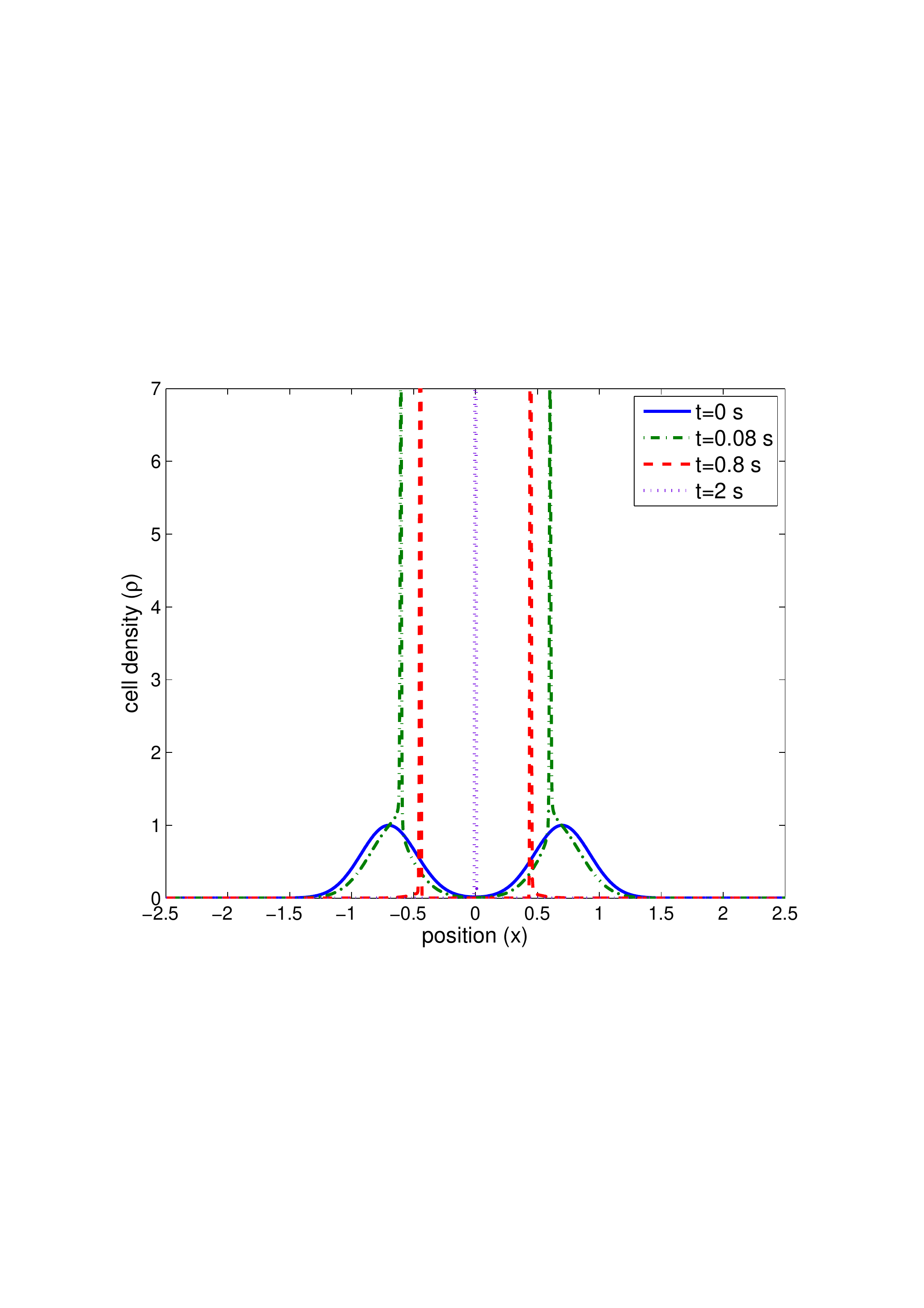}
  \includegraphics[width=2.5in]{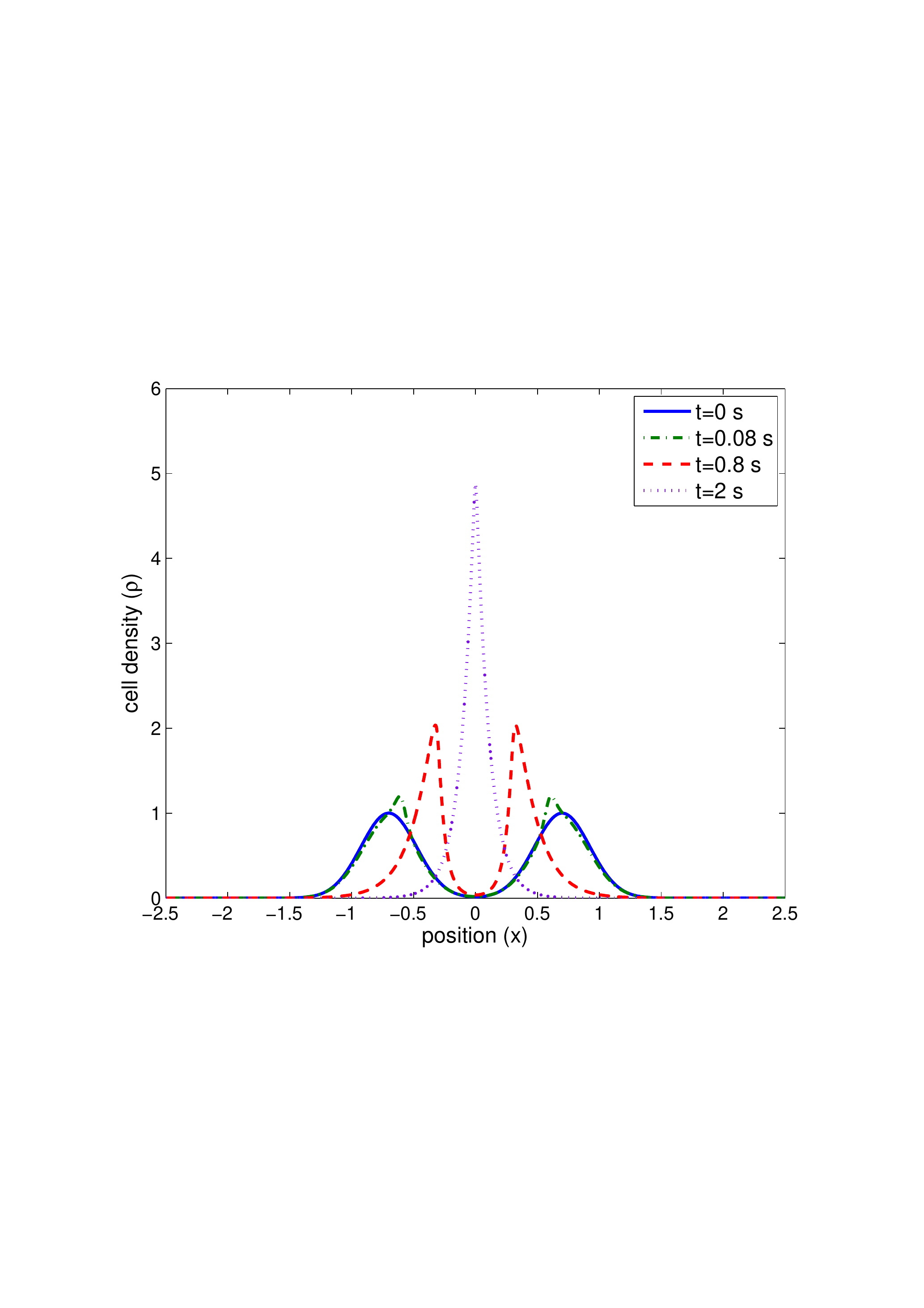}\\
  \caption{Dynamics of the cell density $\rho$ for an initial data given by a two 
    regular bumps: comparison between the macroscopic model (left) and
    the kinetic model for $\eps=0.1$ (right).}\label{fig3}
  \end{center}
\end{figure}

In Figure \ref{fig3bump} we display the numerical results for 
different values of $\eps$ and for
a regular initial data given by the three bumps:
$\rho^{ini}(x)=e^{-10(x-1.25)^2}+0.8 e^{-20 x^2}+e^{-10(x+1)^2}.$
As above, we notice the formation of 3 aggregates that merge into one 
single aggregate.
When $\eps\to 0$, we observe that the numerical solutions converges to
the one computed in the macroscopic case, which is an illustration of
the result of Theorem \ref{prop:AP}.

\begin{figure}[htp]
\begin{center}
  \includegraphics[width=2.5in]{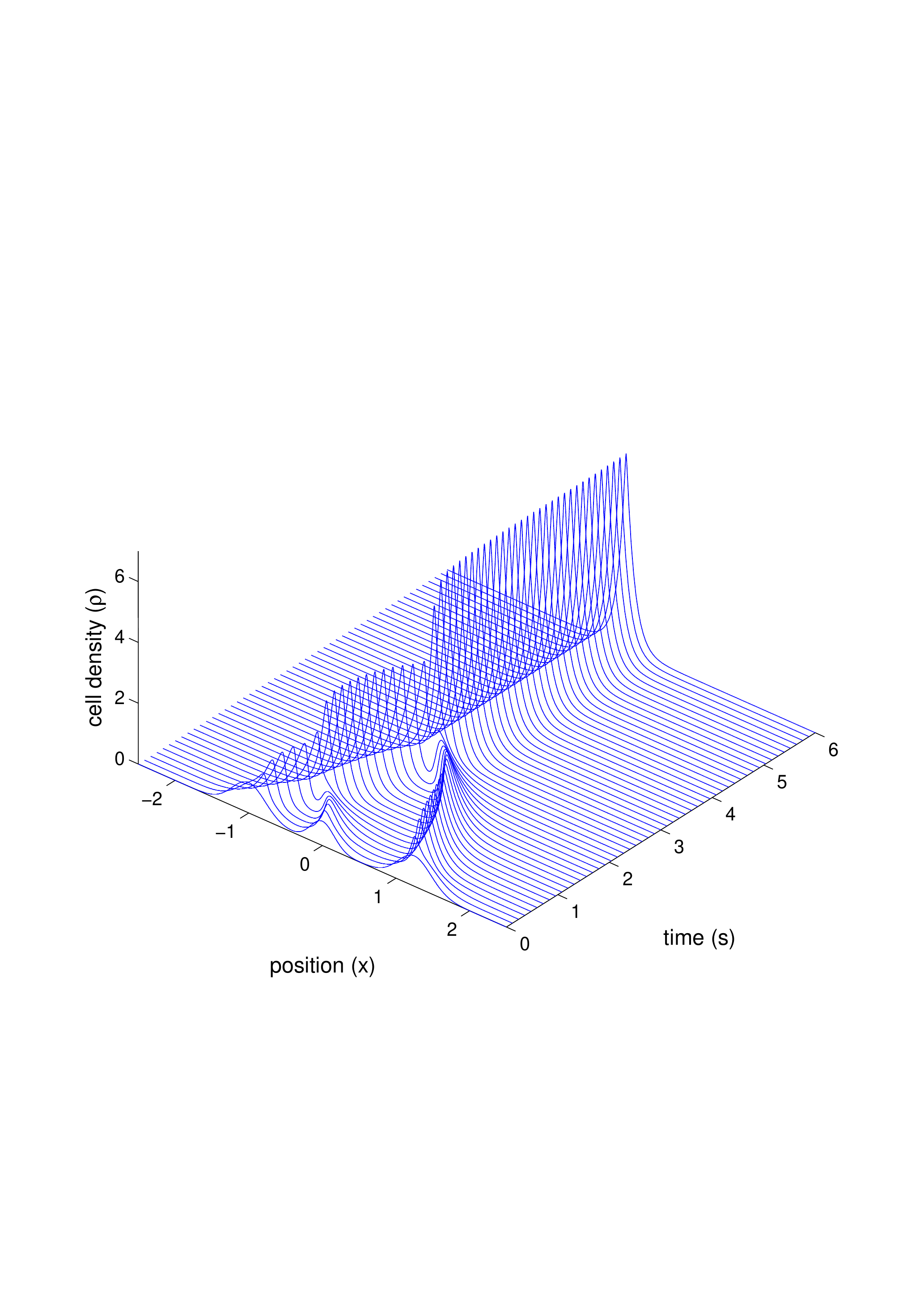}
  \includegraphics[width=2.5in]{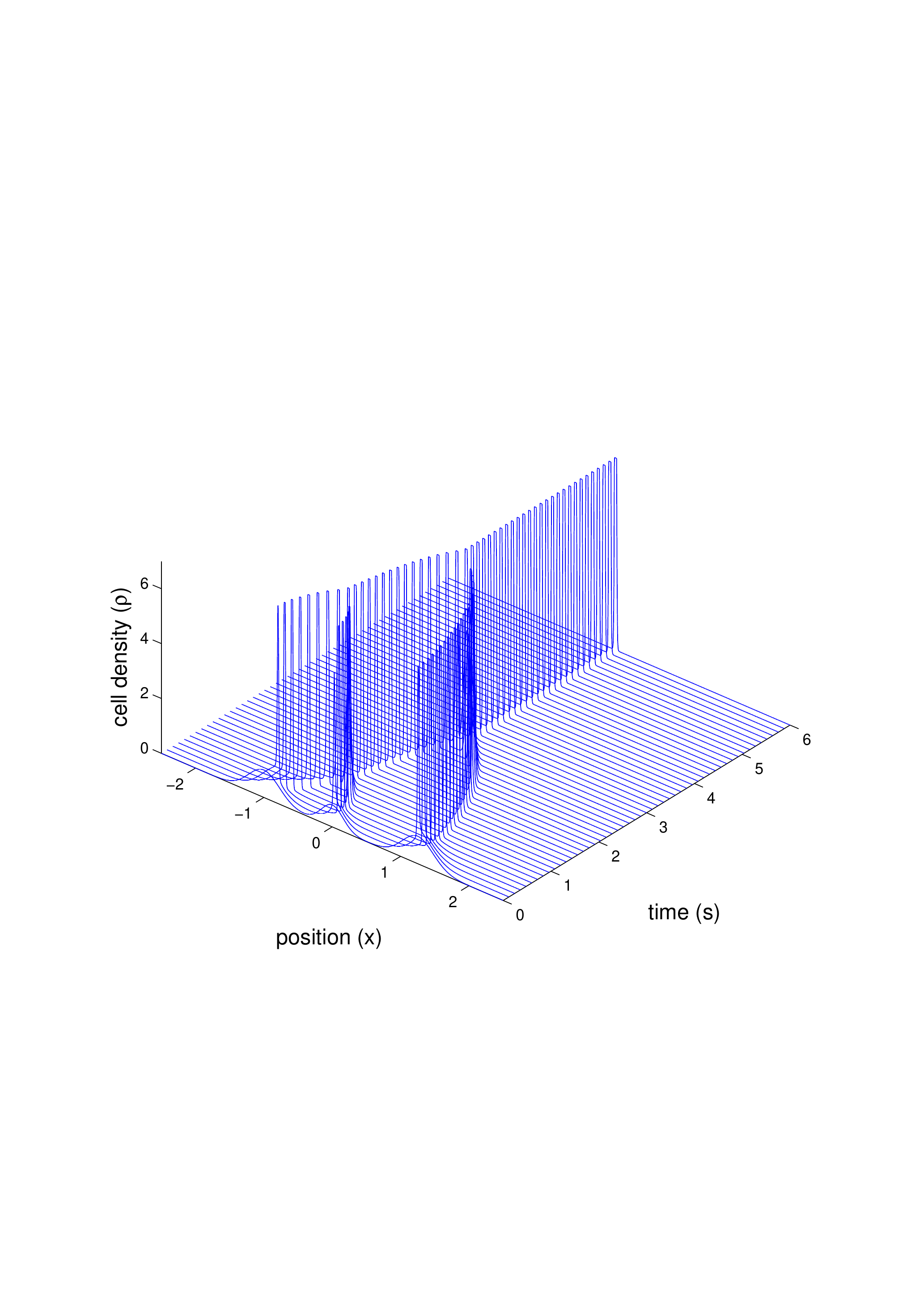}\\
  \caption{Dynamics of the cell density for an initial data given by a sum of 3
    regular bumps: simulation of the kinetic model with 
    $\eps=0.1$ (left) and $\eps=10^{-3}$ (right).}\label{fig3bump}
  \end{center}
\end{figure}

In the kinetic case, stationary state for \eqref{chemokinetic}
are given by
	$$
v\pa_xf_\eps = \frac{1}{\eps} (\phi(-v\pa_xS_\eps) \rho_\eps - f_\eps).
	$$
Summing the equation for $v$ and for $-v$, we deduce easily that
we have that $f_\eps(v)=f_\eps(-v)$. 
Then, using the expression of $\phi$ in \eqref{assumpphi}, 
the kinetic equation rewrites, for $v>0$,
	$$
v\pa_x f_\eps = -\frac{1}{\eps} \phi_1(v\pa_xS_\eps) f_\eps.
	$$
Formally, if we consider that the function $\phi_1$ is an approximation of the
function $-sign$. Then, the latter equation is a linear ODE which can be
solved easily and implies that
$f$ is given by the sum of exponential function with the tail 
$\pm\frac{1}{\eps v}$. This behaviour corresponds to what is observed 
in Figure \ref{fig3} right.

Finally, we emphasize the importance of the choice of the discretized 
macroscopic velocity. For instance, in the aggregation equation \eqref{eq:aggreg}, if instead of defining
the discretization \eqref{eq:ai12} we take $a_{i+1/2}^n=a((S_{i+1}^n-S_i^n)/\delta x)$,
we obtain Figure \ref{fig:wrong}, to be compared with Figure \ref{fig2}.
\begin{figure}[htp]
\begin{center}
  \includegraphics[width=2.5in]{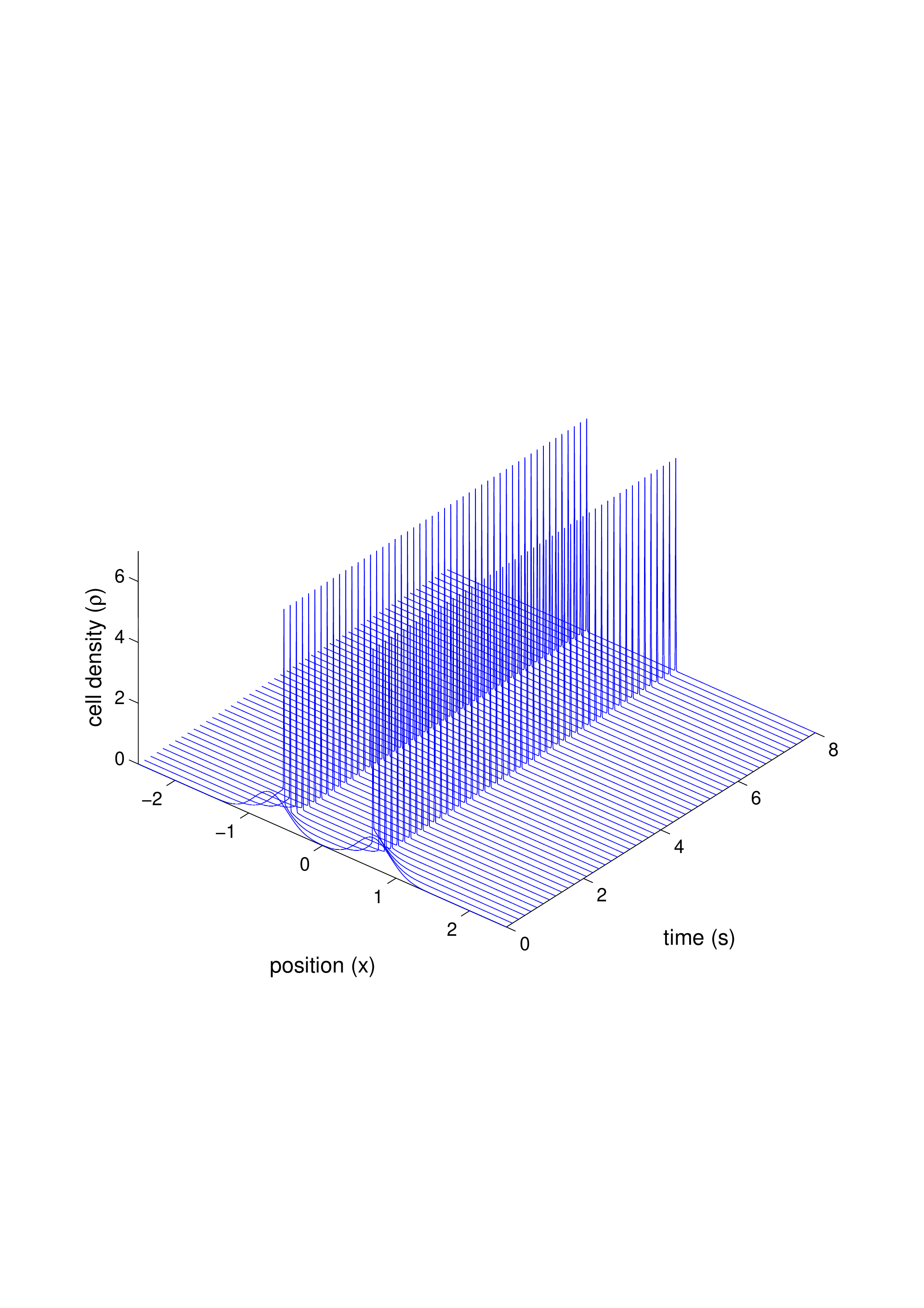}
  \caption{Wrong velocity discretization for \eqref{eq:aggreg}}
  \label{fig:wrong}
  \end{center}
\end{figure}
Concerning the kinetic model, we display in Figure \ref{fig:wrongKin} the results 
obtained when the discretization of $E_{ij}^n$ in \eqref{eq:ui12} is replaced by
$E_{ij}^n=E(v_j,\pa_xS_i^n)$, with $\pa_xS_i^n=(S_{i+1}^n-S_{i-1}^n)/2\delta x$.
We notice in Figure \ref{fig:wrongKin}(left) that for $\eps=10^{-3}$ the behaviour of the density 
remains comparable with
the macroscopic model (see Figure \ref{fig_macro}). When $\eps$ goes to zero, namely here
$\eps=10^{-5}$, we observe the same kind of result as for the macroscopic case, compare 
Figures \ref{fig:wrongKin}(right) and \ref{fig:wrong}.

\begin{figure}[htp]
\begin{center}
  \includegraphics[width=2.5in]{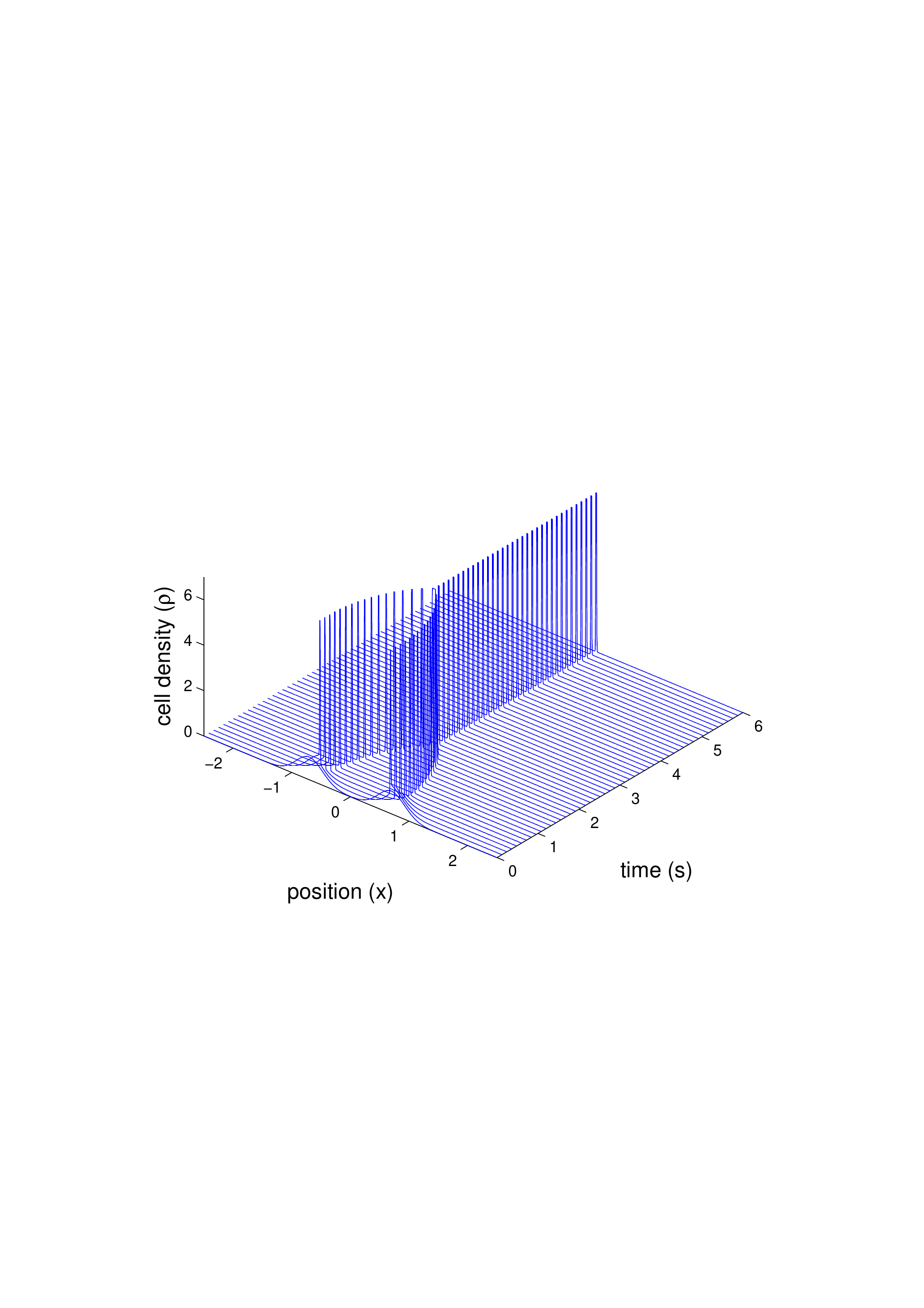}
  \includegraphics[width=2.5in]{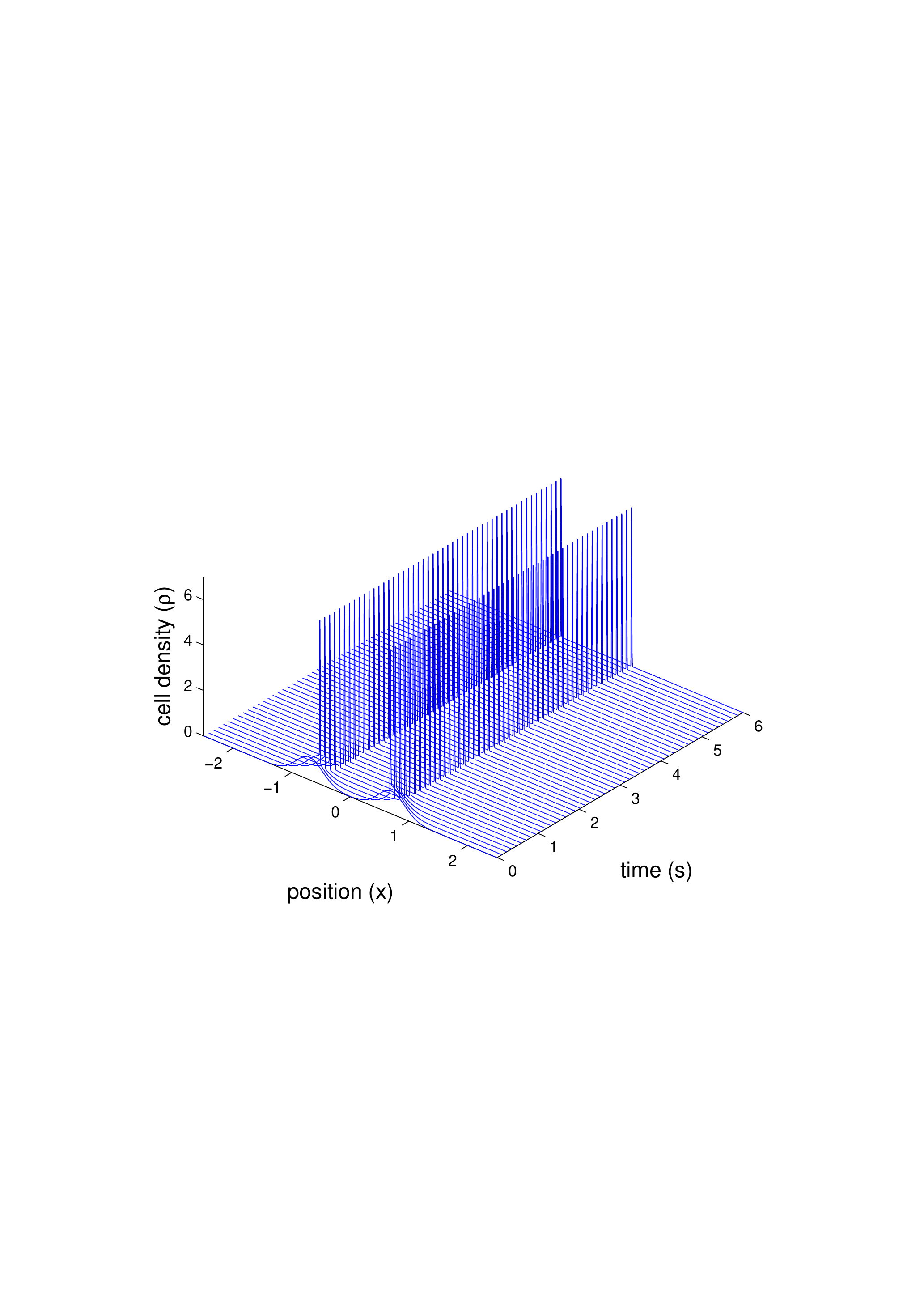}\\
  \caption{Wrong velocity discretization for kinetic model: 
	$\eps=10^{-3}$ (left) -- $\eps=10^{-5}$ (right)}\label{fig:wrongKin}
  \end{center}
\end{figure}

\subsubsection{Repulsive case}
Finally, we conclude this paper with some numerical results without any proof
in the repulsive case i.e. when the function $a$ is non-increasing.
Then assumption \eqref{hyp_a} is not satisfied and the velocity 
$x\mapsto a(\pa_xS)$ does not satisfy the one-sided Lipschitz estimate (\ref{OSLC}).
However, it can be proved (using the arguments in e.g. \cite{NPS})
that if $\rho^{ini}\in L^1\cap W^{1,\infty}(\RR)$, we have 
global in time existence of weak solutions in $L^1\cap W^{1,\infty}(\RR)$. 
Using the dynamics of Dirac masses to design a particle scheme is therefore not straghtforward in this case.
In this respect, we refer 
to \cite{bonaschi}, where gradient flow solutions are proved to be equivalent to entropy solutions of the 
Burgers equation, with a particular focus on the repulsive case.
A particle discretization is also proposed. 
It is then interesting to implement our numerical discretization 
\eqref{eq:rhoprime}--\eqref{eq:Jdis} for the macroscopic model
\eqref{chemomacro} in the repulsive case. 

Figure \ref{figrepuls1} displays the numerical results for 
$a(x)=-2/\pi \mbox{ Arctan}(10 x)$ (left) and 
for $a(x)=-2/\pi \mbox{ Arctan}(50 x)$ (right) with 
the initial data $\rho^{ini}(x)=e^{-10 x^2}$.
Figure \ref{figrepuls2} displays the dynamic of the cells density for
$a(x)=-2/\pi \mbox{ Atan}(10x)$ and for the initial data
$\rho^{ini}(x)=e^{-10(x-0.7)^2}+e^{-10(x+0.7)^2}$.

\begin{figure}[ht!]
\begin{center}
  \includegraphics[width=2.5in]{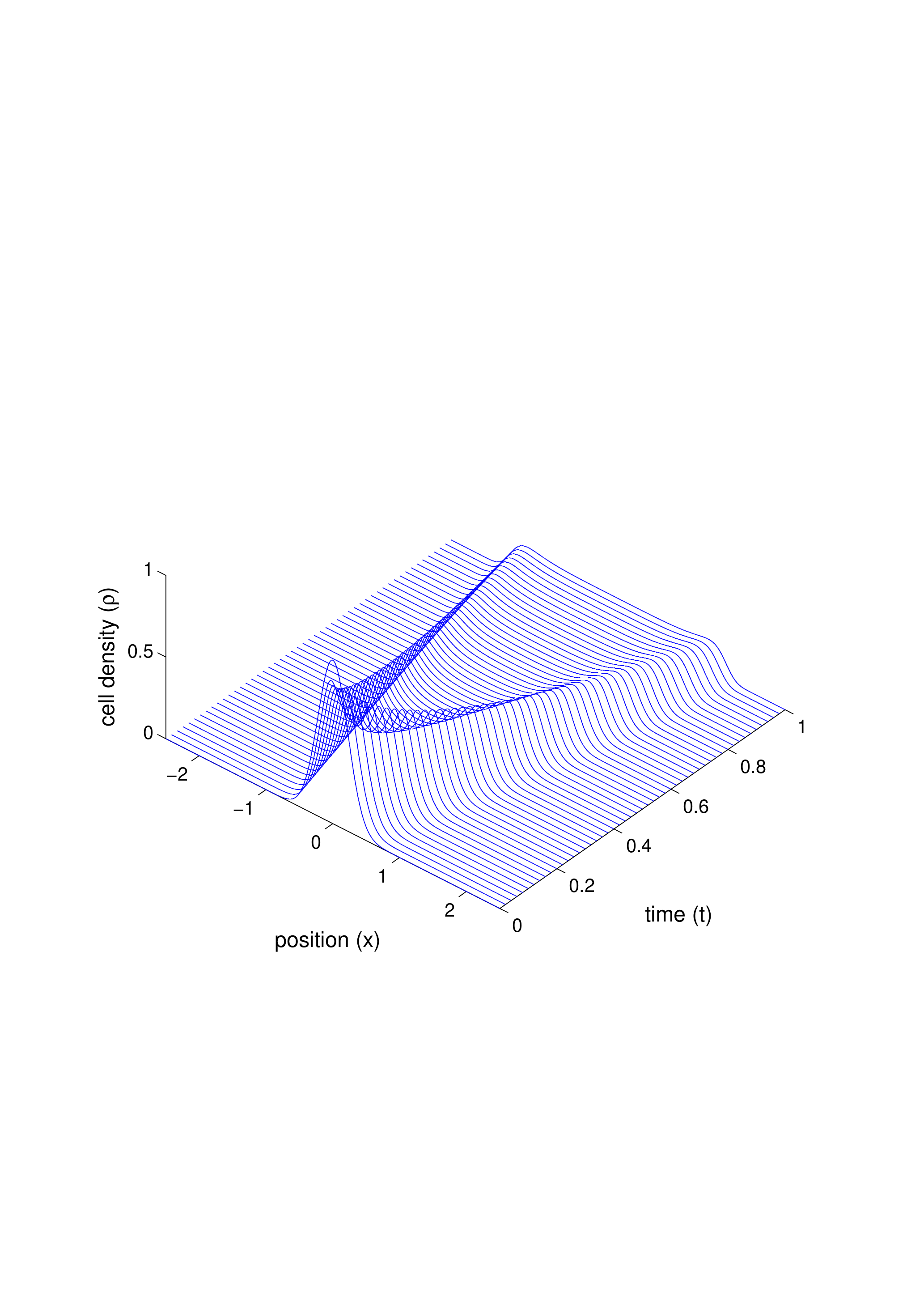}
  \includegraphics[width=2.5in]{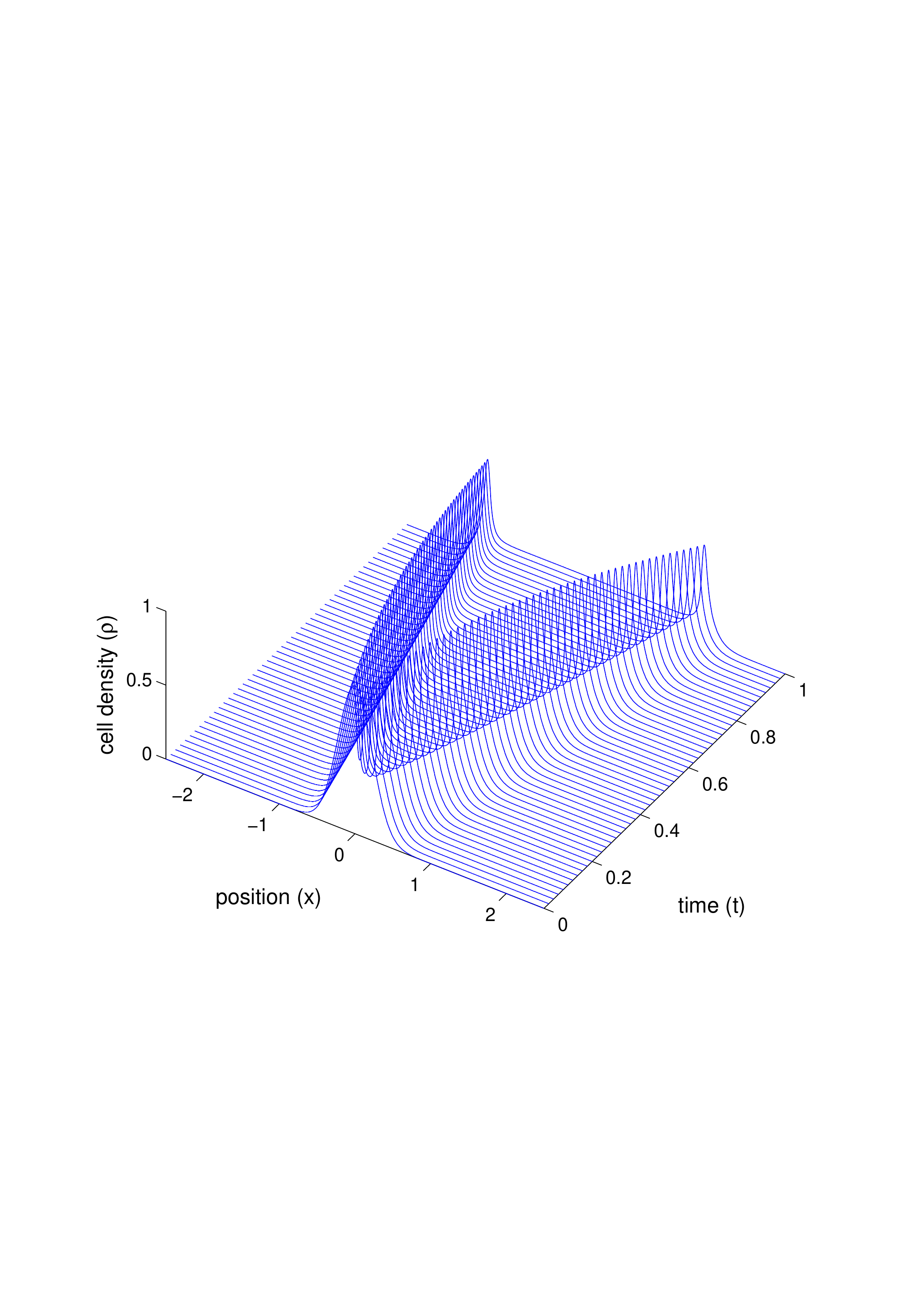}\\
  \caption{Dynamics of the cell density in the repulsive case, i.e. for a non-increasing function $a(x)=-2/\pi \mbox{ Atan}(kx)$. Left: $k=10$; Right: $k=50$.}
  \label{figrepuls1}
  \end{center}
\end{figure}

\begin{figure}[ht!]
\begin{center}
  \includegraphics[width=3.1in]{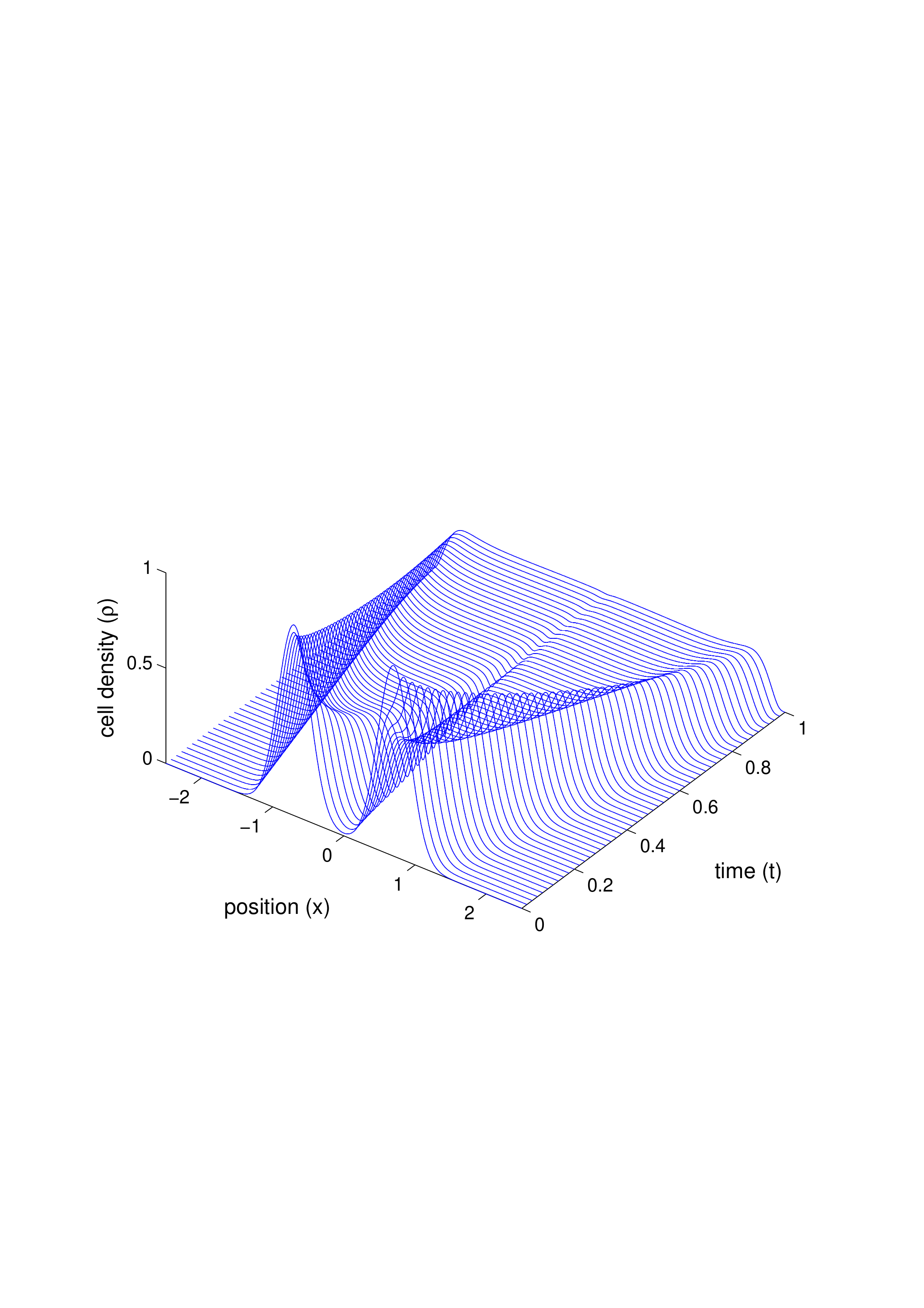}\\
  \caption{Dynamics of the cell density for an initial data given by a sum of 2
    regular bumps in the repulsive case for $a(x)=-2/\pi \mbox{ Atan}(10x)$.}\label{figrepuls2}
  \end{center}
\end{figure}



\begin{thebibliography}{99}

\bibitem{ambrosioBV} {\sc L. Ambrosio, N. Fusco, D. Pallara}, 
{\bf Functions of Bounded Variation and Free Discontinuity Problems},
  Oxford University Press, 2000

\bibitem{Ambrosio} {\sc L. Ambrosio, N. Gigli, G. Savar\'e}, 
  {\bf Gradient flows in metric space of probability measures}, 
  Lectures in Mathematics, Birk\"auser, 2005

\bibitem{benedetto} {\sc D. Benedetto, E. Caglioti, M. Pulvirenti}, 
{\it A kinetic equation for granular media}, RAIRO Model. Math. Anal. Numer.,
{\bf 31} (1997), 615--641

\bibitem{Bertozzi1} {\sc A.L. Bertozzi, J. Brandman}, 
{\it Finite-time blow-up of $L^\infty$-weak solutions of an aggregation equation}, 
Comm. Math. Sci., {\bf 8} (2010), n$^o$ 1, 45--65

\bibitem{Bertozzi2} {\sc A.L. Bertozzi, J.A. Carrillo, T. Laurent}, {\it Blow-up
  in multidimensional aggregation equation with mildly singular interaction
kernels}, Nonlinearity {\bf 22} (2009), 683--710

\bibitem{Bertozzi3} {\sc A.L. Bertozzi, T. Laurent}, {\it Finite-time blow-up 
of solutions of an aggregation equation in $\RR^n$}, Comm. Math. Phys., {\bf 274}
(2007), 717--735

\bibitem{bonaschi} {\sc G. Bonaschi, J.A. Carrillo, M. Di Francesco, M. Peletier},
{\it Equivalence of gradient flows and entropy solutions for singular nonlocal interaction equations in 1D},
preprint arXiv:1310.4110 {\tt http://arxiv.org/abs/1310.4110}

\bibitem{bj1} {\sc F. Bouchut, F. James}, 
  {\it One-dimensional transport equations with discontinuous coefficients},
  Nonlinear Analysis TMA {\bf 32}  (1998), n$^o$ 7, 891--933

\bibitem{BJpg} {\sc F. Bouchut, F. James},
{\it Duality solutions for pressureless gases, monotone scalar
conservation laws, and uniqueness},
Comm. Partial Differential Eq., {\bf 24} (1999), 2173--2189

\bibitem{calvez} {\sc N. Bournaveas, V. Calvez, S. Guti\`errez, B. Perthame},
  {\it Global existence for a kinetic model of chemotaxis via dispersion and
  Strichartz estimates}, Comm. Partial Differential Eq., {\bf 33} (2008), 79--95.

\bibitem{burger} {\sc M. Burger, V. Capasso, D. Morale}, 
  {\it On an aggregation model with long and short range interactions},
  Nonlinear Anal. Real World Appl. (2007) {\bf 8}, 939--958

\bibitem{camassa} {\sc R. Camassa, D. Holm},
{\it An integrable shallow water equation with peaked solitons},
Phys. Rev. Lett., {\bf 71} (1993), 1661--1664

\bibitem{Carrillo} {\sc J. A. Carrillo, M. DiFrancesco, A. Figalli, T. Laurent, D. Slep\v{c}ev},
{\it Global-in-time weak measure solutions and finite-time aggregation for
nonlocal interaction equations}, Duke Math. J. {\bf 156} (2011), 229--271

\bibitem{pieton} {\sc R.M. Colombo, M. Garavello, M. L\'ecureux-Mercier}, 
  {\it A class of nonlocal models for pedestrian traffic}, 
  Math. Models Methods Appl. Sci., (2012) {\bf 22}(4):1150023, 34.

\bibitem{pieton2} {\sc G. Crippa, M. L\'ecureux-Mercier}, {\it Existence and 
  uniqueness of measure solutions for a system of continuity equations with non-local flow}, 
 Nonlinear Differential Equations Appl. (NoDEA), {\bf 20} (2013), n$^o$ 3, 523--537

\bibitem{dolschmeis} {\sc Y. Dolak, C. Schmeiser}, {\it Kinetic models for
    chemotaxis: Hydrodynamic limits and spatio-temporal mechanisms}, J. Math.
  Biol. {\bf 51}, 595--615 (2005).

\bibitem{filblaurpert} {\sc F. Filbet, Ph. Lauren\c{c}ot, B. Perthame},
{\it Derivation of hyperbolic models for chemosensitive movement}, 
J. Math. Biol., {\bf 50} (2005), 189--207.

\bibitem{filbetjin} {\sc F. Filbet, S. Jin}, {\it A class of asymptotic-preserving 
    schemes for kinetic equations and related problems with stiff sources},
  J. Comput. Phys., {\bf 229} (2010), 7625--7648

\bibitem{gosse2} {\sc L. Gosse}, {\it Asymptotic-preserving and well-balanced schemes 
    for the 1D Cattaneo model of chemotaxis movement in both hyperbolic and 
    diffusive regimes}, J. Math. Anal. Appl. {\bf 388} (2012), n$^o$ 2, 964--983

\bibitem{gosse1} {\sc L. Gosse}, {\it A well-balanced scheme for kinetic models of
    chemotaxis derived from one-dimensional local forward-backward problems},
  Mathematical Biosciences, to appear.

\bibitem{gosse3} {\sc L. Gosse, G. Toscani}, {\it Space localization and well-balanced 
    schemes for discrete kinetic models in diffusive regimes}, SIAM J. Numer. Anal. 
  {\bf 41} (2003), n$^o$ 2, 641--658

\bibitem{parma} {\sc F. James, N. Vauchelet}, {\it On the hydrodynamical limit 
    for a one dimensional kinetic model of cell aggregation by chemotaxis},
  Riv. Mat. Univ. Parma., {\bf 3} (2012), 91--113

\bibitem{jamesnv} {\sc F. James, N. Vauchelet},
  {\it Chemotaxis: from kinetic equations to aggregate dynamics}, Nonlinear
  Diff. Eq. Appl. NoDEA, {\bf 20} (2013), Issue 1, 101--127

\bibitem{hyp_proc} {\sc F. James, N. Vauchelet}, {\it Numerical simulation of a 
hyperbolic model for chemotaxis after blow-up}, 
preprint {\tt http://hal.archives-ouvertes.fr/hal-00772653/fr/}, to appear in
Proceedings of 14 Conference on Hyperbolic Problems, Padova, Italy (2012)

\bibitem{jamesnv2} {\sc F. James, N. Vauchelet}, {\it Equivalence between duality and gradient flow solutions for
  one-dimensional aggregation equations}, 
preprint {\tt http://hal.archives-ouvertes.fr/hal-00803709/fr/}

\bibitem{jin} {\sc S. Jin}, {\it Asymptotic preserving (AP) schemes for multiscale
    kinetic and hyperbolic equations: a review}, Riv. Mat. Univ. Parma, 
  {\bf 3} (2012), 177--216

\bibitem{jin2} {\sc S. Jin}, {\it Efficient asymptotic-preserving (AP) schemes 
    for some multiscale kinetic equations}, SIAM J. Sci. Comput., {\bf 21}
  (1999), n$^o$ 2, 441--454

\bibitem{pareschi1} {\sc S. Jin, L. Pareschi}, {\it Discretization of the multiscale 
    semiconductor Boltzmann equation by diffusive relaxation schemes}, J. Comput. Phys. 
  {\bf 161} (2000), n$^o$ 1, 312--330

\bibitem{pareschi2} {\sc S. Jin, L. Pareschi, G. Toscani}, {\it Uniformly accurate 
    diffusive relaxation schemes for multiscale transport equations}, SIAM J. Num. Anal.
  {\bf 38} (2001), n$^o$ 3, 913--936

\bibitem{lemou} {\sc M. Lemou, L. Mieussens}, {\it A new asymptotic preserving scheme 
    based on micro-macro formulation for linear kinetic equations in the
    diffusion limit}, SIAM J. Sci. Comput., {\bf 31} (2008), n$^o$ 1, 334--368

\bibitem{topaz} {\sc A.J. Leverentz, C.M. Topaz, A.J. Bernoff}, {\it Asymptotic
  dynamics of attractive-repulsive swarms}, SIAM J. Appl. Dyn. Systems, {\bf 8}
  No 3, 880--908

\bibitem{li} {\sc H. Li, G. Toscani}, {\it Long time asymptotics of kinetic models of granular flows}, 
Arch. Rat. Mech. Anal., {\bf 172} (2004), 407--428

\bibitem{morale} {\sc D. Morale, V. Capasso, K. Oelschl\"ager}, 
{\it An interacting particle system modelling aggregation behavior: 
    from individuals to populations}, J. Math. Biol., {\bf 50} (2005), 49--66

\bibitem{NPS} {\sc J. Nieto, F. Poupaud, J. Soler}, {\it High field limit for
  Vlasov-Poisson-Fokker-Planck equations}, Arch. Rational Mech. Anal. {\bf 158}
(2001), 29--59

\bibitem{okubo} {\sc A. Okubo, S. Levin}, 
{\bf Diffusion and Ecological Problems: Modern Perspectives}, Springer, Berlin, 2002

\bibitem{nv} {\sc N. Vauchelet}, {\it Numerical simulation of a 
  kinetic model for chemotaxis}, Kinetic and Related Models {\bf 3} (2010),
n$^o$ 3, 501--528


\bibitem{volpert} {\sc A.I. Vol'pert}, {\it The spaces BV and quasilinear equations},
  Math. USSR Sb., {\bf 2} (1967), 225--267
\end{thebibliography}
\end{document}